\newcommand{\M}{\overline{\mathcal{M}}}
\newcommand{\C}{\mathbb{C}}
\newcommand{\U}{\mathcal{U}}
\newcommand{\F}{\mathcal{F}}
\newcommand{\PP}{\mathbb{P}}
\newcommand{\OO}{\mathcal{O}}
\newtheorem{theorem}{Theorem}[section]
\newtheorem{thm}[theorem]{Theorem}
\newtheorem{cor}[theorem]{Corollary}
\newtheorem{lemm}[theorem]{Lemma}
\newtheorem{prop}[theorem]{Proposition}
\newtheorem{defi}[theorem]{Definition}
\newtheorem{condition}[theorem]{Conditions}
\newtheorem{remark}[theorem]{Remark}
\DeclareMathOperator{\Span}{Span}
\DeclareMathOperator{\Pic}{Pic}
\DeclareMathOperator{\Hom}{Hom}
\DeclareMathOperator{\Def}{Def}
\DeclareMathOperator{\Spec}{Spec}
\DeclareMathOperator{\Bl}{Bl}
\DeclareMathOperator{\Cone}{Cone}
\DeclareMathOperator{\Gal}{Gal}
\DeclareMathOperator{\Obs}{Obs}
\numberwithin{equation}{section}
\title []{Geometry of rational curves on low degree complete intersections}
\author {xuanyu pan}
\address{Department of Mathematics, Washington University in St.Louis, St.Louis, MO 63130}
\email{pan@math.wustl.edu}
\date{\today}
\begin{document}

\maketitle

\begin{abstract} For a smooth complete intersection $X$, we consider a general fiber $\F$ of the following evaluation map $ev$ of the Kontsevich moduli space
\[ev:\M_{0,m}(X,m)\rightarrow X^m\]
and the forgetful map $F:\F \rightarrow \M_{0,m}$. We prove that a general fiber of the map $F$ is a smooth complete intersection if $X$ is of low degree. The result sheds some light on the arithmetic and geometry of Fano complete intersections.
\end{abstract}
\tableofcontents

\section{Introduction}\label{notation}
Rationally connected varieties has been a central object of study in both algebraic geometry and arithmetic geometry. This notion can be seen as the algebraic analogue of path connectedness in topology. Rational simple connectedness is a notion proposed by B.~Mazur and systematically developed by de Jong-Starr, which can also be thought of as the analogue of simple connectedness in topology. This very sophisticated notion has been sucessfully used by de Jong-He-Starr to finish a proof of Serre's conjecture II (Pub. I.H.E.S) \cite{DS2} and a new proof of de Jong's period-index theorem \cite{Starr}.

One thing that we have learned from the paper of de Jong-He-Starr (Pub. I.H.E.S) \cite{DS2} is that a good understanding of the geometry of the moduli space of rational curves on a rationally connected variety is essential for applications in arithmetic. To add one more example of such application, we mention a beautiful theorem of B.~Hassett using "rational simple connectedness" to prove weak approximation. For more arithmetic applications, we refer to \cite{Brendan}, \cite{Tian1} and \cite{CZ}. On the other hand, a good understanding of these moduli spaces could also have applications in algebraic cycles on Fano manifolds, see \cite{TianZong} and \cite{PAN2}.

Given the importance of understanding the geometry of the moduli spaces of rational curves, it is somewhat unfortunate that we do not know much beyond the cases of homogeneous spaces (e.g. \cite{Zhuyi} \cite{KP}), or spaces with a large group action. For example, not much is known for complete intersections in the projective space. In this paper, we explore the moduli spaces of rational curves on complete intersections and fill the gap when the complete intersection are of low degree. 

In general, these moduli spaces are relatively easy to define as a functor, but very difficult to understand concretely. We combine classical projective geometry with advanced techniques of deformations of rational curves to give some concrete descriptions of these moduli spaces. In order to give these descriptions (see Theorem \ref{mainthm}), we need some notions. Let X be a projective variety in $\PP^n_{\mathbb{C}}$ with an effective curve class $\beta\in H_2(X,\mathbb{Z})$. The Kontsevich moduli space $\M_{0,m}(X,\beta)$ parameterizes data $(C, f, x_1,\dots,x_m)$ of
\begin{enumerate}
\item[(i)]
a proper, connected, at-worst-nodal, arithmetic genus $0$ curve $C$,
\item[(ii)]
an ordered collection $x_1,\dots,x_m$ of distinct smooth points of
$C$,
\item[(iii)]
and a morphism $f:C\rightarrow X$ with $f_*([C])=\beta$
\end{enumerate}
such that $(C,f,x_1,\dots,x_m)$ has only finitely many automorphisms. There is an evaluation morphism

 $$
\text{ev}_{\beta}:\M_{0,m}(X,\beta) \rightarrow X^m, \ \ (C,f,x_1,\dots,x_m)
\mapsto (f(x_1),\dots,f(x_m)).
$$
The space $\M_{0,m}(X,\beta)$ is a Deligne-Mumford stack, we refer to \cite{BM} for the construction.

\begin{defi} \label{hompoly}
Let $s$ and $d$ be natural numbers. We say that a family of homogeneous polynomials on $\PP^n$ is of type $T_1(d,s)$ if these polynomials consist of one polynomial of degree $d$, $s$ polynomials of degree $2$, $s$ polynomials of degree $3$, $\ldots$, and $s$ polynomials of degree $d-1$. We also denote the family of homogeneous polynomials by
\[
T_1(d,s)=\left(
  \begin{array}{ccccc}
    2 & 3 & \cdots & d-1 &  \\
    \vdots &\vdots &  & \vdots & d \\
    2 & 3 & \cdots & d-1 &  \\
  \end{array}
\right).
\]
We call a family of homogeneous polynomials on $\PP^n$ is of type $T_2(d,s)$ if the polynomials consist of one polynomial of degree $d$, $s$ polynomials of degree $1$, $s$ polynomials of degree $2$, $\ldots$ ,and $s$ polynomials of degree $d-1$. Denote the family of polynomials by
\[
T_2(d,s)=\left(
  \begin{array}{ccccc}
    1 & 2 & \cdots & d-1 &  \\
    \vdots &\vdots &  & \vdots & d \\
    1 & 2 & \cdots & d-1 &  \\
  \end{array}
\right).
\]
\end{defi}
In the following,  we say that a complete intersection of codimension $k$ is of type $(c_1,c_2,\ldots,c_k)$ if it is defined by $k$ homogeneous polynomials $F_j$ of degree $c_j$ for $j=1,\ldots,k$. It is convenient to use $T_i(d,s)$ for expressing the tuple $(c_1,c_2,\ldots,c_k)$. For instance, we denote by $(T_1(d_1,s),T_2(d_2,s))$ the tuple
\[ \left(
  \begin{array}{cccccccccc}
    2 & 3 & \cdots & d_1-1 &  & 1 &2 &\ldots &d_2-1\\
    \vdots &\vdots &  & \vdots & d_1 &\vdots &\vdots & &\vdots &d_2 \\
    2 & 3 & \cdots & d_1-1 &  &1 &2 &\ldots &d_2-1\\
  \end{array}
\right).\]
\begin{condition}\label{ineqcond} \label{forget} \label{phi}
Throughout this paper, we always assume that
\begin{itemize}
\item $X$ is a smooth complete intersection in $\PP^n_{\C}$ of type $(d_1,\ldots,d_c)$,
with $3\leq m\leq n+1$, $c\leq n$, $d_i\geq 2$ and $(c,d_1,\ldots,d_c)\neq (1,2)$ such that \[n+m(c-\sum\limits_{i=1}^c d_i)-c\geq 2;\footnote{This condition is to guarantee that the locus of $\F_t$ parametrizing the stable maps of maximal degeneration type is of dimension at least 2, cf. Corollary \ref{corintersectype}}\]
\item and $p_1 \ldots p_m$ are $m$ general points of $X$ and $\F$ is the general fiber of $ev_{m\alpha}$ over the general point $(p_1,\ldots,p_m)\in X^m$ where $\alpha$ is the homology class of a line;
\item and $\Phi: \F \dashrightarrow \PP^{n-m}=\PP^n/\Span(p_1,\ldots,p_m)$ \footnote{The map $\Phi$ is introduced in the proof of \cite[Lemma 6.4]{DS}. }   is a map associating to a point $(f,C,x_1,\ldots,x_m)\in \F $ the point $\Span(f(C)) \in \PP^{n-m}$\footnote{We denote by $\PP(V)/\PP(W)$ the projective space $\PP(V/W)$ for a flag $(W\subseteq V)$ of a vector space $V$  and denote by $\Span(D)$ the smallest projective subspaces containing the algebraic set $D(\subseteq \PP^n)$.}.

\end{itemize}
\end{condition}

Now, we are able to state our main theorem of this paper.
\begin{thm} \label{mainthm}
Assume the conditions as above. Let $\F_t$\footnote{The fiber $\F_t$ is possibly empty. However, in Section \ref{sectioncyclerelation}, we will see the conditions (\ref{ineqcond}) and Lemma \ref{lemmsection} guarantee that $\F_t$ is a non-empty, cf. Proposition \ref{cyclerelation}.} be the fiber of the forgetful map $F:\F \rightarrow \M_{0,m}$ (see \cite{FP}) over a general point $t$ in $\M_{0,m}$. 

Then, the map $\Phi$ is well defined on $\F_t$. Denote the line bundle $(\Phi|_{\F_t})^*\OO_{\PP^{n-m}}(1)$ by $\lambda|_{\F_t}$. The general fiber $\F_t$ is a smooth complete intersection in $\PP^{n-m(c-1)}$ of type\[\left(T_1(d_1,m), T_1(d_2,m),\ldots, T_{1}(d_{c-1},m) ,T_1(d_c,m) \right)\] via the complete linear system $|\lambda|_{\F_t}|:\F_t \hookrightarrow \PP^{n-m(c-1)}.$
\end{thm}

The importance of the geometry of the fibers $\F$ and $\F_t$ is discovered in the paper \cite{DS}. Roughly speaking, the rational connectedness of $\F$ and the existence of very twisting surface imply the rational simple connectedness of $X$. In the paper \cite{PAN1}, the author shows a conic's version of Theorem \ref{mainthm} (namely, $m=2$), cf. \cite[Theorem 1.1]{PAN1}. The result for conics strengthens some key results in \cite{DS} which is used to prove the (strongly) rational simple connectedness of low degree complete intersections, see \cite[Theorem 1.2]{DS} for details. The result for conics also leads a proof of strong approximation\footnote{Strong approximation is considered to be more difficult to show than weak approximation.} for low degree affine complete intersections over function fields by Q.~Chen and Y.~Zhu, see \cite{CZ}. Therefore, we expect that there will be more applications of our main theorem in arithmetic (eg. weak and strong approximation) and proving the (strongly) rational simple connectedness of complete intersections.

On the other hand, our main theorem instantly has some geometric applications. For example, we show the rational connectedness of $\F$, see Proposition \ref{propRC}, which is relating to rational simple connectedness of $X$; we reprove the formula in enumerative geometry for counting the number of twist cubics passing through $3$ general points on complete intersections due to A.~Beauville, see Proposition \ref{countcubic}, and we also find a new formula for linking conics, see Remark \ref{rmklinkconic}. Besides, Theorem \ref{mainthm} implies that the Picard group of $\F$ is finitely generated, see Propositon \ref{picfinite}. 

At the end, we want to mention that Theorem \ref{mainthm} answers a question for searching a new $2$-Fano manifolds\footnote{In the paper \cite{DS3}, de Jong and Starr introduce a new notion of higher Fano variety which generalizes Fano varieties, namely, $2$-Fano, cf. \cite{DS3} for the definition. Unfortunately, apart from\begin{enumerate}
\item low degree complete intersections in weighted projective spaces,
 \item some Grassmannians,
\item some hypersurfaces in some Grassmannians,
\end{enumerate}
one does not know other examples of 2-Fano varieties.} . In fact, \cite[Theorem 1.4]{CA} predicts that moduli spaces of rational curves on $n$-Fano complete intersections should be $(n-1)$-Fano. Therefore, people expect that one can find some new $2$-Fano manifolds among these "abstract" moduli spaces $\F$ and $\F_t$. Some moduli spaces $\F$ are indeed $2$-Fano, e.g. when $m=3$ and $X$ is of low degree. However, our main theorem shows that $\F$ is either non-Fano or a complete intersection surprisingly.\\

 \textbf{Structure of Paper.} In Section \ref{s3}, we accumulate and prove some useful facts for moduli space $\F$. In Section \ref{sectiondegeneration}, we analyze the possible degeneration type of the stable maps parametrized by $\F$ and show that $\Phi$ is well-defined on $\F_t$, cf. Proposition \ref{degenerationtype} and Lemma \ref{lemmphift}. We also introduce some morphisms $\pi_{p_i}$ on $\F_t$ which are used later to show that the complete linear system $|\lambda|_{\F_t}|$ gives an embedding of $\F_t$.
 
 In Section \ref{sectionRNC}, we characterizes rational normal curves and use the results to show $|\lambda|_{\F_t}|$ separates the points of $\F_t$ in Section \ref{sectioncyclerelation}. And then we prove that $\F_t$ contains a subvariety $Y$ which is a smooth complete intersection in a projective space. Meanwhile, $Y$ is also a complete intersection of hyperplanes induced by $|\lambda|_{\F_t}|$, see Corollary \ref{corintersectype}. In Section \ref{sectionembd}, we use the deformation theory of stable maps and the morphisms $\pi_{p_i}$ to show $|\lambda|_{\F_t}|$ gives an embedding of $\F_t$ into a projective spaces, see Proposition \ref{propembed}.

  In Section \ref{sectionmainthm}, we apply a criterion (Proposition \ref{mainprop}) to the pair $(\F_t,Y)$, and show that $\F_t$ is also a complete intersection in a projective space by induction. In Section \ref{sectionapp}, we give some applications of our main theorem to the rational connectedness of moduli spaces of rational curves, the enumerative geometry and the Picard group of $\F$.

  \textbf{Acknowledgments.} I am very grateful to my advisor Prof.~A.~J.~de Jong for teaching me a lot of moduli techniques and answering me some silly questions. His charitable guidance and unceasing optimism led me through the proof of the contained results. I also really appreciate Prof.~J.~Starr's help for reading some parts of this paper. Without his suggestions, I can not find out some gaps. I thank Prof.~B.~Hassett, Prof.~C.~Liu and Prof.~M.~Fedorchuk for reading some parts of this paper and pointing out to me some references, my friends Dr.~D.~Li and Dr.~D.~Yang in Max Planck Institute for some discussions. Some parts of this paper were written when I was a graduate student in the department of mathematics of Columbia University, and a postdoc in Washington University in St.Louis and Max Planck Institute for Mathematics. I am very grateful to these departments for providing the comfortable enviroments.

\section{Preliminary}\label{s3}
Let us recall some general facts about the moduli spaces of stable maps and the deformation theory of rational curves. The main references for this section are \cite[Section 5 and 6]{DS}, \cite[Chapter II]{K} and \cite{FP}. Suppose that $Y$ is a smooth complex projective variety. 

\begin{lemm}\cite[Kol 96]{K}, \cite[Lemma 3.1]{DS} and \cite{FP} \label{lemmdimcount}
Let $f:C\rightarrow Y$ be a stable map of arithmetic genus zero.
\begin{itemize}
\item If every component of $C$ is contracted or free, then deformations of $f$ are unobstructed. Moreover, there exist deformations of $(C,f)$ smoothing all the nodes of $C$. A general such deformation is free.

\item If the stable map $f$ is parametrized by $\M_{0,k}(Y,\beta)$ and deformations of $f$ are unobstructed, then $\M_{0,k}(Y,\beta)$ is smooth at $[(C,f,x_1,\ldots,x_k)]$ with dimension \[dim(\M_{0,k}(Y,\beta)_{[(C,f,x_1,\ldots,x_k)]})=dim(Y)+\int\limits_{\beta} c_1(T_Y)+k-3.\]

\end{itemize}

\end{lemm}

\begin{lemm}\cite[Lemma 5.1]{DS} \label{lemm00}
With the same notations as above, if every point in a general fiber of the evaluation map \[ev_{\beta}:\M_{0,m}(Y,\beta)\rightarrow Y^m\]
parametrizes a curve whose
irreducible components are all free, then a (non-empty) general fiber of $ev_{\beta}$ is smooth
of the expected dimension
\[ \int\limits_{\beta} c_1(T_Y)-(m-1)dim(Y)+m-3\]
and the intersection with the boundary is a simple normal crossings divisor.
\end{lemm}

\begin{lemm}\label{lemm0}
With the conditions \ref{forget}, the general fiber $\F$ is a smooth projective variety of the expected dimension \[(c+2-\sum\limits_{i=1}^c d_i)m+n-c-3\]with the boundary $\Delta$ which is a simple normal crossing divisor.
\end{lemm}

\begin{proof}
By \cite[Lemma 5.5 and Corollary 5.11]{DS}, the conditions \ref{forget} verify that $m\alpha$ is m-dominant and m-minimal curve class, see \cite[Definition 5.2]{DS}. It follows from \cite[Lemma 5.3]{DS} that $\F$ is a smooth projective variety of the expected dimension
\begin{align*}
\int\limits_{m\alpha} c_1(T_X)-(m-1)dim(X)+m-3=(c+2-\sum\limits_{i=1}^c d_i)m+n-c-3.
\end{align*}
with the boundary $\Delta$ which is a simple normal crossing divisor.

\end{proof}
The following lemma is well-known.
\begin{lemm}\label{lemcha3}
 If the dimension of $X$ is positive, then
\[H^0(X,\OO_X(1))=H^0(\PP^n,\OO_{\PP^n}(1))=n+1.\]
\end{lemm}



\begin{cor}\label{corcha3}
The smooth projective variety X is linearly non-degenerated, i.e., the variety X is not in any hyperplane.
\end{cor}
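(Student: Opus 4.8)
The plan is to convert linear non-degeneracy into the cohomological vanishing that is already produced inside the proof of Lemma \ref{lemcha3}. The key observation is the standard dictionary: $X$ is contained in some hyperplane of $\PP^n$ if and only if there is a nonzero linear form $\ell\in H^0(\PP^n,\OO_{\PP^n}(1))$ vanishing identically on $X$, and such a form is precisely a nonzero global section of the twisted ideal sheaf $I_1(1)=\mathcal{I}_X(1)$, where $I_1$ is the ideal sheaf of $X$ appearing in the Koszul filtration above. Hence the corollary is equivalent to the single assertion $H^0(\PP^n,I_1(1))=0$.

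To make the dictionary precise I would twist the exact sequence $0\rightarrow I_1\rightarrow\OO_{\PP^n}\rightarrow\OO_X\rightarrow 0$ by $\OO_{\PP^n}(1)$ and pass to the long exact cohomology sequence, which identifies $H^0(\PP^n,I_1(1))$ with the kernel of the restriction map
\[H^0(\PP^n,\OO_{\PP^n}(1))\longrightarrow H^0(X,\OO_X(1)).\]
Thus $X$ is linearly non-degenerate if and only if this restriction is injective, and the embedding $X\hookrightarrow\PP^n$ is the one cut out by the full linear system $|\OO_X(1)|$ exactly when the map is an isomorphism.

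Finally I would simply invoke the computation already carried out in the proof of Lemma \ref{lemcha3}: there, chasing the split short exact sequences of the Koszul complex and using $d_i\geq 2$, one obtains $H^0(\PP^n,I_1(1))=H^1(\PP^n,I_1(1))=0$. The first of these vanishings is exactly what is needed, so no nonzero linear form vanishes on $X$ and $X$ lies in no hyperplane. There is essentially no obstacle here beyond recognizing that the relevant input is the intermediate vanishing $H^0(\PP^n,I_1(1))=0$ established in the lemma's proof, rather than merely the dimension count $h^0(X,\OO_X(1))=n+1$ recorded in its statement.
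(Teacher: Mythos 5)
Your proposal is correct and follows essentially the same route as the paper: both identify a hyperplane containing $X$ with a nonzero section of $I_X(1)$ and then quote the vanishing $H^0(\PP^n,I_X(1))=0$ obtained in the proof of Lemma \ref{lemcha3}. Your remark that the needed input is the intermediate vanishing rather than the bare dimension count is a fair (and accurate) clarification of what the paper's one-line proof is implicitly using.
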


\begin{proof}
It follows from Lemma \ref{lemcha3} that $H^0(X,I_X(1))=0$
where $I_X$ is the ideal sheaf of $X$. It means that no linear form is vanishing on $X$, i.e., the variety $X$ is not in any hyperplane.
\end{proof}

\begin{cor} \label{generalpoints}
There are $m(\leq n+1)$ general points in $X\subseteq \PP^n$ such that they span a projective space $\PP^{m-1}$.
\end{cor}

\begin{proof}
We pick up a general point in $X$ first. Secondly, we pick up the second general point and get a line by spanning those two points. By Corollary \ref{corcha3}, we can pick up the third general point which is not on this line. Note that $n+1 \geq m$. We can process in this way to produce $m$ general points of $X$ which span $\PP^{m-1}$ by Corollary \ref{corcha3}.
\end{proof}



\begin{lemm}\label{defphi} \label{exception} \label{lemmembed}
The map $\Phi$ (\ref{phi}) is a morphism with the only exceptions
\begin{enumerate}
  \item $c=2, d_1=d_2=2, m\geq 6$,
  \item $c=1, d_1=3, m\geq 5$.
\end{enumerate}

\end{lemm}

\begin{proof}
 The lemma from the proof of \cite[Lemma 6.4]{DS}. We sketch the proof here for the sake of completeness. In fact, by Lemma \ref{lemm0}, one can verify \cite[Hypothesis 6.3]{DS} if the tuple $(c,d_1,\ldots,d_c,m)$ satisfies neither (1) nor (2). The map $\Phi$ is induced by the morphism \[\phi_{\lambda}: U_p \rightarrow \PP^{n-m}=\PP^{n}/\Span(p_1,\ldots, p_m)\]in the proof of \cite[Lemma 6.4]{DS}, where $U_p$ is the maximal open substack of the corresponding fiber of \[ev:\M_{0,m}(\PP^n,m)\rightarrow (\PP^n)^m\]parametrizing stable maps for which no non-contracted irreducible component is mapped into the
linear subspace $\Span(p_1, \ldots , p_m)$, see \cite[Definition 6.1]{DS} for more details. Note that $U_p$ contains $\F$ under the assumption, cf. \cite[Lemma 6.2]{DS}. In particular, we have that $\Phi=\phi_{\lambda}|_{\F}$. We prove the lemma.

\end{proof}

\begin{lemm} \label{lemmsection}
  Let $(C,f,x_1,\ldots,x_m)$ be a stable map parametrized by a point in $\F$. Suppose the domain $C$ consists of a comb with $m$ rational teeth. If the map $f$ collapses the handle and maps the teeth to the lines meeting at a point in $X$ (for the notations, see \cite[page 156, Definition 7.7]{K}), then the forgetful morphism $F:\F\rightarrow \M_{0,m}$ has a section $\sigma$.
\end{lemm}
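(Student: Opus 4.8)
The plan is to build $\sigma$ by a single uniform ``comb--building'' construction over the whole of $\M_{0,m}$, using the given maximal degeneration map only to pin down a vertex and a compatible system of lines. So let $(C,f,x_1,\dots,x_m)$ be the maximal degeneration map in the statement, write $q\in X$ for the image of the contracted handle and $\ell_i=f(T_i)\subseteq X$ for the image of the $i$-th tooth $T_i$. Since $f(x_i)=p_i$ and each $T_i$ meets the handle, each $\ell_i$ is a line on $X$ passing through both $q$ and $p_i$, i.e. $\ell_i=\overline{qp_i}\subseteq X$. The existence of this configuration $(q;\ell_1,\dots,\ell_m)$ is precisely the geometric input I extract from the maximal degeneration hypothesis, and it is what guarantees that the source of $\sigma$ will land in $\F$.

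Next I would promote this to a morphism on the universal family. Let $\pi:\mathcal U\to\M_{0,m}$ be the universal curve with its $m$ tautological sections $s_1,\dots,s_m$, and form $\widetilde{\mathcal U}$ by gluing along each $s_i$ a constant tooth $\M_{0,m}\times\PP^1$, attaching $s_i$ to $0\in\PP^1$ and placing the new $i$-th marking at $\infty$ of the $i$-th tooth. Define $\widetilde f:\widetilde{\mathcal U}\to X$ to be the constant map to $q$ on $\mathcal U$ and, on the $i$-th tooth, a fixed isomorphism $\PP^1\xrightarrow{\sim}\ell_i$ sending $0\mapsto q$ and $\infty\mapsto p_i$. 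Over each point of $\M_{0,m}$ this is exactly a comb whose handle is contracted to $q$ and whose teeth map isomorphically to the $\ell_i$, hence a stable map of degree $m$ with $\widetilde f(\infty_i)=p_i$; this defines the desired morphism $\sigma:\M_{0,m}\to\F$.

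Finally I would verify the section identity $F\circ\sigma=\mathrm{id}$. Forgetting $\widetilde f$ and stabilizing $(\widetilde{\mathcal U},\infty_1,\dots,\infty_m)$ contracts every tooth, since a tooth carries only two special points (its node and its marking) and is therefore unstable; the contraction returns $(\mathcal U,s_1,\dots,s_m)$ with the $i$-th marking back at $s_i$. Thus $F(\sigma(C',x_1,\dots,x_m))=(C',x_1,\dots,x_m)$ pointwise, and the same computation in families shows $\sigma$ is a genuine section.

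The step I expect to be the main obstacle is the very first one: ensuring that a vertex $q$ with all $m$ lines $\overline{qp_i}$ contained in $X$ really exists for the fixed general points $p_1,\dots,p_m$. Setting $Z_i=\{q\in X:\overline{qp_i}\subseteq X\}$, the cone swept by the lines on $X$ through $p_i$ (of dimension $n-\sum_j d_j$), the requirement is $\bigcap_{i=1}^m Z_i\neq\emptyset$. Passing to the incidence variety $\{(q;p_1,\dots,p_m):q\in\bigcap_i Z_{p_i}\}$ and projecting to $X^m$, the expected fibre dimension is $n+m(c-\sum_i d_i)-c$, exactly the quantity in the main theorem, so the numerical hypotheses force it to be nonnegative. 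The delicate point is upgrading this dimension count to genuine dominance, namely nonemptiness of the general fibre; for this one must check that the endpoint evaluation of these lines is submersive—that the lines through a general vertex are free enough—rather than merely counting parameters. Once a single maximal degeneration point is known to lie in $\F$, however, this obstacle is removed and the construction above goes through verbatim.
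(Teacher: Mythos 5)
Your construction is essentially the paper's own proof: the paper likewise extracts the vertex $q$ and the lines $l_1,\ldots,l_m$ from the given maximal-degeneration point, attaches the lines as teeth to an arbitrary $(C',x_1,\ldots,x_m)\in\M_{0,m}$ with the handle collapsed to $q$, and declares this to be $\sigma$, leaving "it is easy to check it is a section." Your version over the universal curve, together with the explicit stabilization argument (each tooth carries only two special points, hence is contracted by the forgetful map), and your correct observation that the existence of the configuration $(q;\ell_1,\ldots,\ell_m)$ is exactly the lemma's hypothesis rather than something to be proved, simply fill in the details the paper omits.
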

We say that this map $f$ is of maximal degeneration type.
\begin{proof}
We describe the section $\sigma$ pointwise. It follows from the assumption that there are $m$ lines $l_1,\ldots,l_m$ in $X$ passing through $m$ general points $p_1,\ldots,p_m$ and intersecting at a different point $q\in X$. Suppose that $p_i\in l_i$. Let $(C,x_1,\ldots,x_m)$ be a point in $\M_{0,m}$. By \cite[Theorem 3.6]{BM}, there exists a stable map as follows:
\[ f: C\cup l_1\cup \ldots \cup l_m \rightarrow X\]
where the domain $C\cup l_1\cup \ldots \cup l_m$ is the union of $C$ and $l_i$ identifying $x_i$ with $p_i$ and the map $f$ is the identity on each $l_i$ and collapses $C$ to the point $q$. The section $\sigma$ associates to a point $(C,x_1,\ldots,x_m)$ the point $(C\cup l_1\cup \ldots \cup l_m,f,x_1,\ldots,x_m)$ in $\F$. It is easy to check it is a section of $\F$.

\end{proof}

\begin{cor}
With the same hypothesis as Lemma \ref{lemmsection}, the fiber $\F_t$ of the forgetful map $F$ over a general point $t\in \M_{0,m}$ is connected. Moreover, the fiber $\F_t$ is a smooth projective variety.
\end{cor}

\begin{proof}
By the Stein factorization \cite{H} and Lemma \ref{lemmsection}, we have the following factorization,
 \[
 \xymatrix{
 \F\ar[dr]^s \ar[dd]^F\\
 & Y\ar[dl]^h\\
 \M_{0,m}\ar@/^2pc/[uu]_{\sigma}
 }
 \]
where $Y$ is a normal variety, the morphism $h$ is finite over $\M_{0,m}$, the section $\sigma$ is a section of $F$ in Lemma \ref{lemmsection} and the fibers of $s$ are connected. Therefore, the composition of $\sigma$ and $s$ is a section of $h$. Since $h$ is a finite morphism between two normal varieties, the composition $s \circ \sigma$ is an isomorphism, i.e., the variety $Y$ is isomorphic to $\M_{0,m}$. It follows the first assertion. By the generic smoothness theorem, we have the second assertion.
\end{proof}

\section{Classification of Degeneration Type of Stable Maps} \label{sectiondegeneration}
In this section, we prove Lemma \ref{degree} which characterizes rational normal curves. And we use it to show the rational curves on $X$ passing through $l$ general points have degree at least $l$, cf. Lemma \ref{degreecomp} . This property of rational curves on $X$ shows that the image of a stable map parametrized by $\F_t$ is the union of lines and a rational normal curves, see Proposition \ref{degenerationtype}. It gives a classification of degeneration types of stable maps parametrized by $\F_t$. By this classification, we are able to show that $\Phi$ is well-defined on $\F_t$, cf. Lemma \ref{lemmphift}. At the end of this section, we introduce some morphisms $\pi_{p_i}$ on $\F_t$ which will be used later.

\begin{lemm}\label{degree} Let $C$ be a reduced and irreducible curve $C$ in $\PP^n$ passing through $\{p_1,\ldots,p_m\}$. Then, the degree of $C$ is at least $m-1$. The degree of $C$ is $m-1$ if and only if the curve $C$ is a rational normal curve in $\Span(p_1,\ldots,p_m)=\PP^{m-1}$.

\end{lemm}
\begin{proof}
Suppose that the degree of $C$ is at most $m-2$. The points $\{p_1,\ldots,p_{m-1}\}$ span a $\PP^{m-2}$ and the intersection of $\PP^{m-2}$ and $C$ contains these $m-1$ points. Therefore, the curve $C$ is in $\PP^{m-2}$. However, the point $p_m \in C$ is not in $\Span(p_1,\ldots,p_{m-1})$. It is a contradiction.

Note that $\Span(p_1,\ldots,p_m) \cap C$ contains $m$ points. By the intersection theory, the curve $C$ is in $\Span(p_1,\ldots,p_m)=\PP^{m-1}$ if the degree of $C$ is $m-1$. Moreover, the curve $C$ is nondegenerate in $\Span(p_1,\ldots,p_m)=\PP^{m-1}$. Hence, by \cite[Page 179]{GH1}, the curve $C$ is a rational normal curve in $\Span(p_1,\ldots,p_m)$.

\end{proof}

Let $(C,f,x_1,\ldots,x_m)$ be a stable map parametrized by $\F$. Assume that the image $f(C)$ in $X$ consists of $k$ irreducible components $C_1,\ldots,C_k$ with $f(C)=C_1 \cup \ldots \cup C_k$. Denote by $P$ the set $\{p_1,\ldots,p_m\}.$
\begin{defi}
With the notations as above, the subset $P(C_i)$ of $P$ is defined to be
$P\cap C_i.$ We say that $f$ is a pseudo-embedding if the restriction of $f:C\rightarrow X$ to each irreducible component of $C$ is a contraction or an embedding. 
\end{defi}
\begin{lemm} \label{degreecomp}
With the notations as above, we have that a rational curve on $X$ passing through $l$ general points has degree at least $l$.
In particular, every irreducible component $C_i$ of $f(C)$ has degree at least $\#(P(C_i))$.

\end{lemm}
\begin{proof}
In order to show the first assertion, we consider the evaluation map
\[ev_l: \M_{0,l}(X,(l-1)\alpha)\rightarrow X^l\]
associating a point $(C',g,y_1,\ldots,y_l)$ to the point $(g(y_1),\ldots, g(y_l))$. We calculate the dimension of $\M_{0,l}(X,(l-1)\alpha)$ at the point $(\PP^1,g,y_1,\ldots,y_l)$ parametrizing (irreducible) a rational curve on $X$ passing through a general point of $X$. It follows from Lemma \ref{lemmdimcount} that
\begin{align*}
dim(\M_{0,l}(X,(l-1)\alpha)_{(\PP^1,f,x_1,\ldots,x_l)})&=dim(X)+\int\limits_{(l-1)\alpha} c_1(T_X)+l-3\\
&=dim(X)+(l-1)(n+1-\sum\limits_{i=1}^c d_i)+l-3.
\end{align*}
Therefore, we have that 
\begin{align*}
dim(\M_{0,l}(X,(l-1)\alpha)_{(\PP^1,f,x_1,\ldots,x_l)})-dim(X^l)=(c+1-\sum\limits_{i=1}^c d_i)(l-1)+ l-3 < 0
\end{align*} 
by our setup in Section \ref{notation}. It follows that the evaluation map $ev_l$ is not dominant at $(\PP^1,g,y_1,\ldots,y_l)$. It implies the first assertion by Lemma \ref{degree}. 

The first assertion implies the second one.

\end{proof}

\begin{lemm} \label{rnc}
With the notations as above, the stable map $(C,f,x_1,\ldots,x_m)$ for which $\Span(f(C))$ is a projective space of dimension $m$ is a pseudo-embbedding map. Moreover, the image $f(C)(\subseteq X)$ is the union $C_1 \cup \ldots \cup C_k$ of rational normal curves with $P=\bigcup\limits_{i=1}^k P(C_i)  $, $P(C_i)\cap P(C_j)=\emptyset$ for $i\neq j$ and $deg (C_i)= \#P(C_i)$.
\end{lemm}
\begin{proof}
By Lemma \ref{degreecomp}, we have that
\[f(C)=C_1\cup\ldots\cup C_k\]
where $C_i$ is a reduced and irreducible curve on $X$ passing through $P(C_i)$ and the degree $deg(C_i)$ of $C_i$ is at least $\#(P(C_i))$. It implies that \begin{equation}\label{degineq1}
m=deg(f_*([C]))\geq deg(f(C)) = \sum\limits_{i=1}^k deg(C_i)\geq \sum\limits_{i=1}^k \#(P(C_i))
\end{equation}
where the first inequality is an equality if and only if $f$ is a contraction or an one-to-one map on each irreducible component of $C$, the second inequality is an equality if and only if $deg (C_i)= \#P(C_i)$.
On the other hand, it is clear that
\begin{equation}\label{degineq2}
P=\bigcup\limits_{i=1}^k P(C_i) \text{~and~} m=\#(P)\leq \sum\limits_{i=1}^k  \#(P(C_i))
\end{equation}
where the inequality is an equality if and only if the family of sets $P(C_i)$ is a partition of $P$, i.e., $P(C_i)\cap P(C_j)=\emptyset$ for $i\neq j$. 

From the argument above, all the inequalities in (\ref{degineq1}) and (\ref{degineq2}) are equalities. If all the $C_i$ are smooth rational normal curves, then $f$ is a pseudo-embbedding map and the proposition follows. Indeed, note that the irreducible and reduced curve $C_i$ passes through $P(C_i)$ which are in general position and the degree of $C_i$ is $\#(P(C_i))$. It follows that $\Span(C_i)$ is a projective space of dimension at most $deg(C_i)$. If the dimension of $\Span(C_i)$ attains $deg(C_i)$, then $C_i$ is a nondegenerate curve in $\Span(C_i)$ and it is a smooth rational normal curve by the argument in \cite[Page 179]{GH1}. If the dimension of $\Span(C_j)$ does not attain $deg(C_j)$ for some j, then the dimension of $\Span(f(C))=\Span(C_1\cup \ldots\cup C_k)$ does not attain the maximal value $\sum\limits_{i=1}^k deg(C_i)=m$. Therefore, it is a contradiction.

\end{proof}

\begin{lemm} \label{lemmspan}
If $(C ,f ,x_1,\ldots ,x_m)$ is parametrized by $\F_t$, then $\Span(f(C))$ is a projective space of dimension $m$.
\end{lemm}

\begin{proof}
It follows from Lemma \ref{defphi} that it suffices to prove the dimension of the locus in $\F$ parametrizing the stable mapped into $\Span(p_1,\ldots, p_m)$ is at most $m-4~(=dim(\M_{0,m})-1)$ for $(c,d_1,\ldots,d_c)=(1,3)$ and $(2,2,2)$. 

We follow the argument in \cite[Page 31]{DS}. Let $Z$ be the intersection of $X$ with a general $\PP^{m-1}$ in $\PP^n$. Then we have that
\[dim(Z)=dim(X)+m-h^0(X,\OO_X(1))\]
and (by the adjunction formula) \[\int\limits_{\alpha}c_1(T_Z)=\int\limits_{\alpha}c_1(T_X)+m-h^0(X,\OO_X(1)).\]
We may assume that the general $\PP^{m-1}$ is $\Span(p_1,\ldots,p_m)$ and $\{p_1,\ldots ,p_m\}$ are general points of $Z$ and $X$. It is clear that the evalution map \[ev_Z:\M_{0,m}(Z,m\alpha)\rightarrow Z^m\] is domaint, otherwise, there are no stable maps parametrized by $\F$ and mapped into $\Span(p_1,\ldots,p_m)$, which implies that $\Phi$ has no indeterminate locus. Moreover, the class $m\alpha$ is $m$-minimal-dominant for $X$ in the sense of \cite[Definition 5.2]{DS} by \cite[Lemma 5.5]{DS}, and the general fiber $\F_Z$ of $ev_Z$ over $\{p_1,\ldots,p_m\}$ is contained in $\F$. It follows that the class $m\alpha$ is $m$-minimal-dominant for $Z$. Therefore, we can apply \cite[Lemma 5.3]{DS} and Lemma \ref{lemm00} to $Z$. It follows that the dimension of the general fiber $\F_Z$ is given by 
\begin{align*}
&\int\limits_{m\alpha} c_1(T_Z)-(m-1)dim(Z)+m-3 \\
=&m(n+1-\sum\limits_{i=1}^c d_i)+(m-h^0(X,\OO_X(1)))-(m-1)dim(X)+m-3.
\end{align*}
A direct calculation show that $dim(\F_Z)$ is at most $m-4$ for $(c,d_1,\ldots,d_c)=(1,3)$ and $(2,2,2)$. It is clear that $\F_Z$ is the locus in $\F$  parametrizing the stable mapped into $\Span(p_1,\ldots, p_m)$. We prove the lemma.
\end{proof}

\begin{prop} \label{degenerationtype}
 Let $(C ,f ,x_1,\ldots,x_m)$ be a stable map parametrized by $\F_t$ (see \ref{forget}). Then,
 \begin{enumerate}
   \item  the curve $C$ consists of $C_1$ and $l_1,\ldots,l_k$ such that $C$ is a comb with handle $C_1$ and teeth $\{l_i\}$, moreover, there is a unique pointed point $x_{j_i}$ is on $\{l_i\}$ and the rest pointed points among the points \{$x_1,\ldots,x_m$\} are on $C_1$, 
   \item the map $f$ is a pseudo-embedding, the image $f(l_i)$ of $l_i$ is a line in $X$ passing through $p_{j_i}$ and $f(C_1)$ is a rational normal curve of degree $m-k$ for $0\leq k \leq m$. In particular, if $k\neq m$, then $(C ,f ,x_1,\ldots,x_m)\in \F_t$ is an embedded curve.
 \end{enumerate}

\end{prop}

\begin{remark}\label{rmkmaxdeg}
For the case $k=m$, the map $f$ collapses the handle $C_1$ to a point $p$ and maps the tooth $l_i$ isomorphically to a line passing through $p$ and $p_i$.
\end{remark}

\begin{proof}
Suppose that $C$ consists of rational curves $C_1,l_1,l_2,\ldots,l_s$. Since a point $t\in \M_{0,m}$ is general, the image $F([C ,f ,x_1,\ldots,x_m])$ parametrizes a smooth rational point with $m$ pointed points. Hence, there is a unique component of $C$ which is not contracted by the stabilization process of the forgetful map $F$, say $C_1$. Let $D$ be a connected component of the union of $l_1,\ldots,l_s$. 

We claim that there is a unique pointed point on $D$. In fact, note that the stabilization process of $F$ contracts $D$ to a point. It follows that $D$ has at most one pointed point. If $D$ has no any pointed point, then the map $f$ contracts $D$ to a point by Lemma \ref{rnc} and Lemma \ref{lemmspan}. Since the stabilization process of $F$ contracts $D$, the curve $C$ has infinitely many automorphisms mapping $D$ into $D$ and fixing $C-D$. Therefore, these automorphisms of $C$ induce infinitely many automorphisms of the stable map $f:C\rightarrow X$. It contradicts with the stability of $f$. We prove the claim. 

A similar discussion implies that $D$ has a unique irreducible component. Namely, $D$ consists of exactly one of $l_1, \ldots, l_s.$ In fact, suppose that $D$ has at least two irreducible components. By the claim above, there is one irreducible component $l$ of $D$ which does not contain any pointed point. It follows from Lemma \ref{rnc} that the map $f$ contracts $l$ to a point. Since the stabilization process of $F$ contracts $l$, the curve $C$ has infinitely many automorphisms mapping $l$ into $l$ and fixing $C-l$. As above, there are infinitely may automorphisms of $f:C\rightarrow X$ induced by these automorphisms of $C$. It contradicts with the stability of $f$. We show the first assertion.

The second assertion follows from the first one and Lemma \ref{lemmspan} and Lemma \ref{rnc}.
\end{proof}

\begin{lemm} \label{lemmphift} \label{remark2}
Let $(C ,f ,x_1,\ldots,x_m)$ be a stable map parametrized by $\F_t$. Then $f(C)$ is smooth at $p_i$ and the tangent direction $T_{f(C),p_i}$ points out of $\Span(p_1,\ldots,p_m).$
In particular, the restriction map $\Phi|_{\F_t}$ of the map $\Phi$  is a morphism.

\end{lemm}
\begin{proof}
We follow the proof of Lemma \ref{defphi}. We use the notations of Proposition \ref{degenerationtype}. If $f(C_1)$ is in $\Span(p_1,\ldots, p_m)$, then $f(l_i)$ are in $\Span(p_1,\ldots, p_m)$, therefore, $f(C)$ is in $\Span(p_1,\ldots, p_m)$ which contradicts with Lemma \ref{lemmspan}. If $f(l_1)$ is in $\Span(p_1,\ldots, p_m)$, then the intersection of $\Span(p_1,\ldots, p_m)$ and $f(C_1)$ contains $m-k$ pointed points plus $f(l_1\cap C_1)$, therefore, $f(C_1)$ is in $\Span(p_1,\ldots, p_m)$ by the fact that $f(C_1)$ is a rational normal curve of degree $m-k$, see Proposition \ref{degenerationtype}. As before, it is also a contradiction. In summary, no non-contracted irreducible component of $C$ is mapped into the
linear subspace $\Span(p_1, \ldots , p_m)$. By Proposition \ref{degenerationtype}, if the tangent direction $T_{f(C),p_i}$ are in $\Span(p_1,\ldots,p_m)$, then the component of $f(C)$ passing through $p_i$ is in $\Span(p_1,\ldots,p_m)$ which is a contradiction. We show the first assertion.

It is clear that $U_p$ (see \cite[Definition 6.1]{DS}) contains $[(C ,f ,x_1,\ldots,x_m)]$, hence, $\F_t\subseteq U_p$. It implies the map $\Phi$ is well-defined on $\F_t$, see the proof of Lemma \ref{defphi}.
\end{proof}

Now, we are able to define some maps on $\F$ and $\F_t$ which will be used in the following sections. Suppose that $\F$ (resp. $\F_t$) has the universal family $(\pi: \U\rightarrow \F, f_{\U}:\U\rightarrow X,  \sigma_1, \ldots, \sigma_m)$ (resp. $(\pi_t: \U_t\rightarrow \F, f_{\U_t}:\U_t\rightarrow X,  \sigma_1, \ldots, \sigma_m)$) where $\sigma_i$ are disjoint sections of $\pi$. Note that we have a diagram as follows
\begin{equation}\label{eq:section}
\xymatrix{\U \ar[r]^{f_{\U}} \ar[d]^{\pi} &X\\
\F\ar@/^2pc/[u]^{\sigma_i}}
\end{equation}
and it induces a map $f_{\U}^*\Omega_X^1\rightarrow \Omega_{\U}^1$ where $\U$ is the universal bundle over $\F$ and $\sigma_i$ is the universal section induced by the i-th pointed point of $\U$. Since there is a canonical map $\Omega^1_{\U}\rightarrow \Omega_{\U/\F}^1$, the composition of these two maps gives rise to $f_{\U}^*\Omega^1_X\rightarrow \Omega_{\U/\F}^1$. It induces a morphism
\[\sigma_i^*f_{\U}^*\Omega_X^1\rightarrow\sigma_i^*\Omega_{\U/\F}^1.\]
Since the image of $\sigma_i$ is in the locus of the smooth point of $\pi$ and the composition $f_{\U}\circ\sigma_i$ is a constant map with value $p_i$, it gives rise to a map
\[(T_{p_i}X)^{\vee}\otimes_{\mathbb{C}}\OO_\F \rightarrow \sigma_i^*\omega_{\U/\F}\]
where $\omega_{\U/\F}$ is the dualizing sheaf of $\pi$. The map is surjective at a point \[[(C,f,x_1,\ldots,x_m)]\in \F\] if and only if $df_*(T_{C,x_i})$ is not zero. It induces a map as follows.

\begin{defi}\label{defpi}
We define a map
\[\pi_{p_i}:\F \dashrightarrow \PP(T_{p_i}X)=\PP^{n-c-1}\]
associating to a point $(C,f, x_1,\ldots,x_m)\in \F$ the tangent direction  \[T_{f(C),p_i}=df_*(T_{C,x_i}) \in \PP(T_{p_i}X),\] where the point $p_i$ is $f(x_i)$.
\end{defi}
The locus of indeterminacy of $\pi_{p_i}$ parametrizes stable maps $(C,f, x_1,\ldots,x_m)$ such that $df_*(T_{C,x_i})=0$. 
Similarly, we have a map \[h:(T_{p_i}X)^{\vee}\otimes_{\mathbb{C}}\OO_\F \rightarrow \sigma_i^*\omega_{\U_t/\F_t}.\]
By Proposition \ref{degenerationtype}, the map $h$ is surjective and induces a morphism $\pi_{p_i}|_{\F_t}$ which is the restriction of the map $\pi_{p_i}$ to $\F_t$. In particular, the rational map $\pi_{p_i}$ is well-defined on $\F_t$.

\begin{lemm} \label{sublinear}
we have that \[(\pi_{p_i}|_{\F_t})^*(\OO_{\PP^{n-c-1}}(1))\simeq (\Phi|_{\F_t})^*(\OO_{\PP^{n-m}}(1)).\]
\end{lemm}
\begin{proof}
The statement is clear from the following commutative diagram:
\begin{equation} \label{projection}
\xymatrix{
\F_t \ar@/_1pc/[dr]^{\Phi|_{\F_t}} \ar[r]^{\pi_{p_i}|_{\F_t}} & \PP(T_{p_i}X)\ar@{.>}[d]^L \\
&\PP^n/\Span(p_1,\ldots,p_m)
}
\end{equation}
where L is the projection map 
\[\xymatrix{L :\PP(T_{p_i}X) \ar@{}[d]^{\|} \ar@{.>}[r]& \PP^n/\Span(p_1,\ldots,p_m) \ar@{}[d]^{\|}\\
\PP^{n-c-1}\ar@{.>}[r]^{pr} & \PP^{n-m}
}\]
associating to a point $[\overrightarrow{v}]\in \PP(T_{p_i}X)$ the point (in $\PP^{n-m}$)  parametrizing \[\Span(\overrightarrow{v},p_1,\ldots,p_m)/\Span(p_1,\ldots,p_m).\]
The indeterminate locus of $L$ consists of the points in $\PP(T_{p_i}X)$ which can be represented by a nonzero vector $\overrightarrow{v}\in T_{p_i}X$ lying in $\Span(p_1,\ldots,p_m)$, hence, the image of $\pi_{p_i}$ is outside of the indeterminate locus of $L$ by Lemma \ref{remark2}. The composition $L\circ \pi_{p_i}|_{\F_t}$ is a morphism and it is $\Phi|_{\F_t}$.

\end{proof}

\section{Geometry of Rational Normal Curves} \label{sectionRNC}
In this section, we analyze rational normal curves in a projective spaces. We show that two rational normal curves of degree $n$ coincide if they pass through the same $n$ points in general position with the same tangent directions at these points. The way to show this property is to put these two rational normal curves in a smooth surfaces and calculate their intersection, see Proposition \ref{propsametang}. We show that the degeneration types (cf. Definition \ref{degtypes}) of two stable maps parametrized by $\F_t$ are the same if they have the same image for all $\pi_{p_i}$, see Lemma \ref{lemm2}.

\begin{defi}
Suppose that $C$ is a smooth rational curve in $\PP^n$ with 
 normal bundle
\[N_{C/\PP^n}=\bigoplus\limits ^{n-1}_{i=1} \OO_C(a_i)\]
and $a_1\leq a_2 \leq \ldots \leq a_{n-1}$. We say that the rational curve $C$ is almost balanced if $a_{n-1}-a_1\leq 1$.
\end{defi}

In the paper \cite{Z}, Z.~Ran gives a careful analysis about the balanced property of rational curves in the projective space. We cite one result from the paper \cite{Z}.

\begin{thm}\cite[Theorem 6.1 (Sacchiero)]{Z}\label{thmrationalbal}
A general rational curve of degree $d\geq n$ in $\PP^n$ is almost balanced.
\end{thm}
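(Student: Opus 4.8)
The plan is to reduce the statement to the construction of a single curve and then to control that curve's normal bundle by an inductive smoothing. First I would record the numerics: restricting the Euler sequence and the normal bundle sequence
\[0\rightarrow T_C\rightarrow T_{\PP^n}|_C\rightarrow N_{C/\PP^n}\rightarrow 0\]
to a smooth rational curve $C\cong \PP^1$ of degree $d$ gives $\deg N_{C/\PP^n}=(n+1)d-2$ while the rank is $n-1$. Writing $(n+1)d-2=q(n-1)+r$ with $0\leq r\leq n-2$, the curve is almost balanced exactly when $N_{C/\PP^n}\cong \OO_C(q)^{\oplus(n-1-r)}\oplus\OO_C(q+1)^{\oplus r}$, and this is equivalent to the two vanishings
\[h^1(C,N_{C/\PP^n}(-q-1))=0\quad\text{and}\quad h^0(C,N_{C/\PP^n}(-q-2))=0.\]
The case $n=2$ is trivial since then $N_{C/\PP^n}$ has rank one, so I assume $n\geq 3$.

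Since $\mathrm{Mor}_d(\PP^1,\PP^n)$ is irreducible, the general degree-$d$ map is an embedding for $n\geq 3$, and the two displayed conditions are open by semicontinuity of cohomology, it suffices to produce one embedded curve of degree $d$ with almost balanced normal bundle; the general curve is then at least as balanced, hence almost balanced, since this is the most balanced splitting type permitted by the degree. I would produce such an example by induction on $d$ with $n$ fixed. The base case $d=n$ is the rational normal curve, whose normal bundle is the balanced bundle $\OO_C(n+2)^{\oplus(n-1)}$ by a classical computation, and indeed $(n-1)(n+2)=(n+1)n-2$.

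For the inductive step (from $d-1\geq n$ to $d$) I would degenerate to a nodal curve $C_0=C'\cup_p L$, where $C'$ is an embedded degree-$(d-1)$ curve with almost balanced normal bundle supplied by the inductive hypothesis, $L$ is a general line, and $p$ is a general point of $C'$. As $C_0$ has arithmetic genus zero it smooths to an embedded rational curve of degree $d$. Using the standard exact sequences for the normal sheaf of the nodal curve $C_0$ (in the style of Hartshorne--Hirschowitz), which glue the restrictions of $N_{C_0}$ along the two components -- each comparable to $N_{C'/\PP^n}$, respectively $N_{L/\PP^n}$, up to a twist at the node $p$ -- I would compute $h^1(N_{C_0}(-q-1))$ and $h^0(N_{C_0}(-q-2))$ and transport the vanishings to the smoothing by semicontinuity. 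The twist at $p$ raises the relevant degrees by one, which is exactly what compensates for the change of $q$ as $d$ increases.

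The main obstacle is precisely this cohomological bookkeeping at the node: one must verify that attaching $L$ in a general direction at a general point keeps both twisted cohomology groups of $N_{C_0}$ of minimal dimension, i.e.\ that the gluing maps at $p$ have maximal rank, so that no summand of the normal bundle is forced outside the range $\{q,q+1\}$. An alternative that avoids iterated smoothing, and fits the Hirzebruch surface techniques used elsewhere in the paper, is to place $C$ on a rational normal scroll $S$ (the image of a Hirzebruch surface $\mathbb{F}_e$) and run the surface sequence
\[0\rightarrow N_{C/S}\rightarrow N_{C/\PP^n}\rightarrow N_{S/\PP^n}|_C\rightarrow 0,\]
in which $N_{C/S}$ is a line bundle computed by adjunction on $S$ and $N_{S/\PP^n}|_C$ is read off from the known splitting of the scroll's normal bundle; there the difficulty shifts to showing that this extension is almost balanced for a suitable choice of $e$ and curve class. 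In either approach the delicate point is controlling the gap $a_{n-1}-a_1$, not the total degree, which is already fixed by the computation above.
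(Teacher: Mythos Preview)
The paper does not prove this theorem at all: it is quoted verbatim as \cite[Theorem 6.1 (Sacchiero)]{Z} and used only through its corollary on rational normal curves. So there is no ``paper's own proof'' to compare against; the author simply imports Sacchiero's result from Ran's paper.

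Your outline is one of the standard routes to Sacchiero's theorem, and the reductions you make (numerics from the Euler sequence, openness of the almost-balanced locus by semicontinuity on the irreducible space $\mathrm{Mor}_d(\PP^1,\PP^n)$, the base case $d=n$) are all correct. You are also honest about where the real work lies: the inductive smoothing of $C'\cup_p L$ requires checking that the elementary modification of $N_{C'/\PP^n}$ at $p$ in the direction of $L$ shifts exactly one summand up by one, and this is precisely what you have not carried out. Without that verification---which is a genuine computation, not a formality, since a bad choice of direction can unbalance the bundle---the argument is a plan rather than a proof. The scroll alternative you mention is closer in spirit to what the present paper does elsewhere, but it has the same status: you have identified the exact sequence without showing the extension is generically as split as possible. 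Either route can be completed, but neither is complete as written.
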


\begin{cor} \label{normalbundle}
If $C$ is a rational normal curve in $\PP^n$, then the normal bundle is
\[N_{C/\PP^n}=\bigoplus\limits ^{n-1}_{i=1} \OO_C(n+2).\]
\end{cor}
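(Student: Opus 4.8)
The plan is to pin down the splitting type by combining a single degree computation with the almost-balanced conclusion of the Sacchiero theorem just quoted, and then observing that exact divisibility upgrades ``almost balanced'' to ``balanced.'' First I would record the basic data: a rational normal curve $C\subseteq\PP^n$ has degree $n$, so $C\cong\PP^1$ and the hyperplane class restricts to $\OO_C(1)\cong\OO_{\PP^1}(n)$. By the Grothendieck splitting I write $N_{C/\PP^n}=\bigoplus_{i=1}^{n-1}\OO_C(a_i)$ with $a_1\le\cdots\le a_{n-1}$, and the goal becomes showing that every $a_i=n+2$.

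Next I would compute the total degree of $N_{C/\PP^n}$ from two short exact sequences. The Euler sequence restricted to $C$,
\[
0\rightarrow \OO_{\PP^1}\rightarrow \OO_{\PP^1}(n)^{\oplus(n+1)}\rightarrow T_{\PP^n}|_C\rightarrow 0,
\]
shows that $T_{\PP^n}|_C$ has rank $n$ and degree $n(n+1)$. The normal bundle sequence
\[
0\rightarrow T_C\rightarrow T_{\PP^n}|_C\rightarrow N_{C/\PP^n}\rightarrow 0,
\]
together with $T_C=\OO_{\PP^1}(2)$, then gives that $N_{C/\PP^n}$ has rank $n-1$ and degree $n(n+1)-2=(n-1)(n+2)$. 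Thus $\sum_i a_i=(n-1)(n+2)$, i.e.\ the $a_i$ average to exactly the integer $n+2$.

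For the splitting type I would invoke the preceding Sacchiero theorem. Every smooth nondegenerate rational curve of degree $n$ spanning $\PP^n$ is a rational normal curve, and these form a single $PGL_{n+1}$-orbit, so $C$ is (projectively equivalent to) the generic rational curve of degree $d=n\ge n$; hence it is almost balanced, $a_{n-1}-a_1\le 1$. This forces every $a_i\in\{a_1,a_1+1\}$. Writing $j$ for the number of indices with $a_i=a_1+1$, the degree relation reads $(n-1)a_1+j=(n-1)(n+2)$, so $j=(n-1)(n+2-a_1)$; the constraint $0\le j\le n-1$ then yields $a_1\in\{n+1,n+2\}$, and in either case one checks that every $a_i$ equals $n+2$. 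This gives $N_{C/\PP^n}=\bigoplus_{i=1}^{n-1}\OO_C(n+2)$.

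The one genuinely substantive point, rather than the routine Euler-sequence bookkeeping, is the appeal to Sacchiero's theorem: I must be sure that the rational normal curve legitimately counts as the ``generic rational curve of degree $n$'' in the sense of that theorem. This is where the projective uniqueness of rational normal curves does the work, since it lets me transfer the generic almost-balanced conclusion to our fixed $C$; once almost-balancedness is in hand, the passage to exact balancedness is purely the elementary divisibility argument above.
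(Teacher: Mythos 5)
Your proof is correct and follows essentially the same route as the paper: compute $\deg N_{C/\PP^n}=(n-1)(n+2)$ from the Euler and normal bundle sequences, use the projective homogeneity of rational normal curves to apply Sacchiero's almost-balanced theorem, and conclude by the divisibility argument. The only difference is that you spell out the final arithmetic step slightly more explicitly than the paper does.
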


\begin{proof}
Since we have two short exact sequences as follows,
\[0\rightarrow T_C \rightarrow T_{\PP^n}|_C \rightarrow N_{C/\PP^n}=\bigoplus\limits ^{n-1}_{i=1} \OO_C(a_i)\rightarrow 0\]

\[0\rightarrow \OO_{C}\rightarrow \bigoplus\limits ^{n+1}_{i=1} \OO_{\PP^n}(1)|_C=\bigoplus\limits ^{n+1}_{i=1} \OO_C(n)\rightarrow T_{\PP^n}|_C\rightarrow 0\]
they give rise to the following equalities,

\begin{enumerate}
  \item $\deg N_{C/\PP^n}=\sum\limits_{i=1}^{n-1} a_i$,
  \item $\deg N_{C/\PP^n}=\deg(T_{\PP^n}|_C)-\deg(T_C)=(n-1)(n+2)$.
\end{enumerate}
Suppose that $a_1\leq a_2\ldots \leq a_{n-1}$. Note that every rational normal curve in $\PP^n$ is projectively linearly isomorphic to each other. By Theorem \ref{thmrationalbal}, the rational normal curves are almost balanced $a_{n-1}-a_{1}\leq 1$. Hence, we have $a_1=a_2=\ldots=a_{n-1}=n+2$.
\end{proof}
The following corollary will be only used in the proof of Proposition \ref{propembed}.
\begin{cor} \label{corovanish}
If the curve $f:C\hookrightarrow X$ is a rational normal curve of degree $m\geq 3$ passing through $p_1,\ldots p_m$, then $H^0(C, N_{C/X}(\sum\limits_{i=1}^m -2p_i))=0$.
\end{cor}

\begin{proof}
In fact, we have a short exact sequence
\[0\rightarrow N_{C/X}\rightarrow N_{C/\PP^n} \rightarrow N_{X/\PP^n}|_C \rightarrow 0\]
for the smooth rational normal curve $C$. It follows that
\[H^0(C, N_{C/X}(\sum\limits_{i=1}^m -2p_i))\subseteq H^0(C, N_{C/\PP^n}(\sum\limits^m_{i=1} -2p_i))=H^0(C, N_{C/\PP^n}(-2m)).\]
The exact sequence of normal bundles $
0\rightarrow N_{C/\PP^m}\rightarrow N_{C/\PP^n}\rightarrow N_{\PP^m/\PP^n}|_C\rightarrow 0$
implies that $N_{\PP^m/\PP^n}|_C(-2m)=\bigoplus\limits^{n-m}\OO_{\PP^{n-m}}(-m)$ by $C \subseteq \PP^m \simeq \Span(C)$ and $N_{\PP^m/\PP^n}=\bigoplus\limits^{n-m}\OO_{\PP^{n-m}}(1)$. Therefore, we have $$H^0(C, N_{C/\PP^n}(-2m))=H^0(C, N_{C/\PP^m}(-2m)).$$
It follows from Corollary \ref{normalbundle} and $m\geq 3$ that $$H^0(C, N_{C/\PP^m}(-2m))=H^0(C,\bigoplus\limits^{m-1}_{i=1}\OO_C(2-m))=0.$$ We prove the corollary.
\end{proof}

\begin{prop}\label{propsametang}
Suppose that two curves $C$ and $C'$ are rational normal curves in $\PP^n$. Let $\{p_1,\ldots,p_n\}$ be $n$ points of $\PP^n$ in general position. If $C$ and $C'$ satisfy the following assumptions
\begin{enumerate}
  \item both curves $C$ and $C'$ pass through the points $\{p_1,\ldots,p_n\}$ and
  \item $T_{p_i}C=T_{p_i}C'$ for all $i$,
\end{enumerate}
where $T_{p_i}C$ (resp. $T_{p_i}C'$) is the tangent line to $C$ (resp. $C'$) at point $p_i$, then $C=C'$.
\end{prop}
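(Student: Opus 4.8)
The plan is to prove the statement by induction on $n$, using projection from one of the points to drop simultaneously the dimension of the ambient space and the degree of the two curves, and then intersection theory on a Hirzebruch surface to recover equality back in the original $\PP^n$. One should first notice that a pencil of conics can be bitangent to two fixed lines at two fixed points, so the statement genuinely fails for $n=2$; hence the induction cannot descend below the base case $n=3$, which must be handled separately. Throughout I assume $n\geq 3$ (in the applications the rational normal curves carrying the tangency conditions have degree at least $3$).

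First I would carry out the projection step. Fix the linear projection $\pi:\PP^n\dashrightarrow\PP^{n-1}$ with center $p_n$. Since $p_n$ is a smooth point of $C$ and the embedded tangent line $T_{p_n}C$ meets $C$ only at $p_n$, the image $\bar C:=\pi(C)$ is a non-degenerate irreducible rational curve of degree $n-1$ in $\PP^{n-1}$, hence a rational normal curve; likewise for $\bar C':=\pi(C')$. Away from the center $\pi$ is a local isomorphism, so $\bar C$ and $\bar C'$ pass through $\bar p_i:=\pi(p_i)$ ($1\leq i\leq n-1$) with tangent directions $d\pi(T_{p_i}C)=d\pi(T_{p_i}C')$, i.e. they again share all tangent directions at these $n-1$ general points; moreover the common direction $T_{p_n}C=T_{p_n}C'$ projects (after blowing up $p_n$) to a single common point $q\in\bar C\cap\bar C'$. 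For $n\geq 4$ the inductive hypothesis applied to $\bar C,\bar C'$ at the points $\bar p_i$ gives $\bar C=\bar C'$; in the base case $n=3$ the curves $\bar C,\bar C'$ are conics through $\bar p_1,\bar p_2$ with prescribed tangent lines there and through the extra point $q$, and the classical pencil of conics bitangent to two lines has a unique member passing through a further general point, so again $\bar C=\bar C'$.

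The heart of the argument is the lifting step, and this is where the main obstacle lies: equality of the two projections does not by itself force $C=C'$, since two distinct curves in $\PP^n$ can meet in arbitrarily many points. To overcome this I would use that both $C$ and $C'$ lie on the cone $S:=\mathrm{Cone}_{p_n}(\bar C)$, a surface of degree $n-1$ with vertex $p_n$. Resolving the vertex presents $S$ as the Hirzebruch surface $\mathbb{F}_{n-1}=\PP(\OO_{\PP^1}\oplus\OO_{\PP^1}(n-1))$, with negative section $E$ ($E^2=-(n-1)$), fiber $f$, and hyperplane class $H=E+(n-1)f$. The proper transforms $\tilde C,\tilde C'$ are sections meeting each ruling once; since each passes through the vertex simply and with the common tangent direction, both lie in the class $E+nf$ and meet $E$ at the same point (the one over $q$). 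A direct computation gives $(E+nf)^2=n+1$. On the other hand, matching tangent directions at each $\tilde p_i$ forces local intersection multiplicity at least $2$ there, and the shared point over $q$ contributes at least $1$, so if $\tilde C\neq\tilde C'$ then $\tilde C\cdot\tilde C'\geq 2(n-1)+1=2n-1$. Since $2n-1>n+1$ for $n\geq 3$, this contradicts $\tilde C\cdot\tilde C'=n+1$; hence $\tilde C=\tilde C'$ and $C=C'$.

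As a consistency check that reassures one about the expected answer, Corollary \ref{normalbundle} gives $N_{C/\PP^n}\cong\OO_C(n+2)^{\oplus(n-1)}$, so deformations of $C$ preserving all $n$ points and tangent directions are the sections of $N_{C/\PP^n}(-2\sum_{i=1}^n p_i)\cong\OO_C(2-n)^{\oplus(n-1)}$, which vanish for $n\geq 3$. This shows the configuration is infinitesimally rigid, but only the Hirzebruch-surface computation above upgrades rigidity to the genuine uniqueness asserted.
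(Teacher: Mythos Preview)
Your argument is correct and follows essentially the same strategy as the paper's own proof: induction on $n$, projection from one of the $n$ points to reduce both curves to rational normal curves of degree $n-1$ in $\PP^{n-1}$, and then a lifting step on the cone over the common image, resolved to the Hirzebruch surface $\mathbb{F}_{n-1}$, where the class computation $(E+nf)^2=n+1<2(n-1)+1$ forces $\tilde C=\tilde C'$.

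The only noteworthy difference is your treatment of the base case. The paper handles $n=3$ by projecting from \emph{two} of the points to obtain two distinct quadric cones containing $C\cup C'$, and then uses $\deg(\text{cone}\cap\text{cone})=4$ to conclude $C=C'$; it reserves the Hirzebruch computation for the inductive step $n\geq 4$. You instead run the Hirzebruch argument uniformly for all $n\geq 3$, needing only the uniqueness of the projected conic (through $\bar p_1,\bar p_2$ with prescribed tangents there and through the extra point $q$) to feed into the cone construction. This is a mild streamlining: one projection and one surface calculation suffice at every stage, and the check $2n-1>n+1$ already holds at $n=3$. Your closing infinitesimal-rigidity remark via $N_{C/\PP^n}(-2n)\cong\OO_C(2-n)^{\oplus(n-1)}$ is not in the paper's proof of this proposition, but it is exactly the computation the paper later uses in Proposition~\ref{propembed}, so it is a useful sanity check.
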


\begin{proof}
We show the proposition by the induction on $n$. Take $n=3$ first. We project $C$ and $C'$ from $p_1$ to a plane $\PP^2$ and denote this projection by $\pi$. Hence, the image $\pi(C)$ and $\pi(C')$ are conics passing through the point $Q=\pi(p_1)=T_{p_1}C \cap \PP^2$. The conics are tangent to two distinct lines $l_1=\pi(T_{p_2}C)$ and $l_2=\pi(T_{p_3}C)$. It is obvious that $Q$ is neither on these two lines. By an elementary calculation, there is a unique conic $S$ passing through $Q$ and with tangent lines $l_1$ and $l_2$. In fact, any conic which is tangent to ${x_0=0}$ and ${x_1=0}$ is defined by the equation
\[ax_2^2+bx_0x_1=0\]
where $a$ and $b$ are determined by the coordinates of $Q$. By the discussion above, we know that $C,C'\subseteq \Cone (S)$ where $\Cone (S)$ is the projective cone of $S$ with the singular point $p_1$. For the point $p_2$, the same argument implies that $C$ and $C'$ are in another singular quadric $\Cone (S')$ with the singular point $p_2$. It follows that
\[C\cup C' \subseteq \Cone (S')\cap \Cone (S).\]
Since the intersection of $\Cone (S')$ and $\Cone (S)$ is a curve of degree $4$, we have that $C=C'$.

Suppose that the proposition holds for $n=s\geq3$. When $n=s+1$, as above
, we project the curves $C,C'$ from the point $p_1$ to a hyperplane $\PP^{s}$. The projection is denoted by $\pi$. It follows that $\pi(C),\pi(C')\subseteq \PP^s$ are rational normal curves of degree $s$ with the same tangent line $\pi(T_{p_i}C)=\pi(T_{p_i}C')$ at $\pi(p_i)$ for $i=2,3,\ldots,s+1$. By the induction, we conclude that $\pi(C)=\pi(C')=D$. Hence, both $C$ and $C'$ are in $\Cone (D)$ joining $p_1$ and $D$.

We blow up the unique singular point $p_1$ of $\Cone (D)$. By \cite[Chapter IV]{H}, we have
\[
\xymatrix{ \Bl_{p_1}(\Cone (D))\ar@{}[r]|-=&\PP(\OO\oplus\OO(-s))\ar[d]^{pr} \ar[rr]^{|B+sf|}_{Bl} &&\Cone (D)\subseteq \PP^{s+1}\\
&\PP^1 \ar@/^2pc/[u]^{\sigma}
}
\]
where $B$ is the curve class of section $\sigma(\PP^1)$ and $f$ is a fiber of $pr$. The section is unique among the curve classes such that the self-intersection is $-s$. 

We claim that
\[ [\widetilde{C}]=[\widetilde{C'}]=B+(s+1)f \in \Pic(\PP(\OO\oplus\OO(-s)))\]
where $\widetilde{C}$ and $\widetilde{C'}$ are proper transforms of $C$ and $C'$ respectively.
In fact, assume $\widetilde{C}=aB+bf$. Since $\Bl_*([\widetilde{C}])=[C]$, we have that
\[\widetilde{C}\cdot(B+sf)=b=\Bl_*([\widetilde{C}])\cdot\OO_{\PP^{s+1}}(1)=s+1.\]
The second equation follows from the projection formula, see \cite[Chapter 8]{F}. It follows that $b=s+1$.
Obviously, the curves $B$ and $\widetilde{C}$ intersect transversally at a point. Therefore,
it implies that $B\cdot \widetilde{C}=-sa+s+1=1$.
The similar argument works for $\widetilde{C'}$. We show the claim.

It follows from the claim that\[\widetilde{C} \cdot \widetilde{C'}=(B+(s+1)f)^2=-s+2(s+1)=s+2.\]
Note that $\widetilde{C}$ and $\widetilde{C'}$ are tangent at $p_2,\ldots,p_{s+1}$ and intersect with $B$ at the same point. If $\widetilde{C}\neq\widetilde{C'}$, then we have that\[\widetilde{C} \cdot \widetilde{C'} \geq 2s+1 >s+2.\]
It is a contradiction. It follows that \[C=\Bl(\widetilde{C})=\Bl(\widetilde{C'})=C'.\]
\end{proof}

\begin{lemm} \label{generalposition}
If $C$ is a rational normal curve of degree $m$ in $\PP^m$, then any $m+1$ distinct points on $C$ are in general position.
\end{lemm}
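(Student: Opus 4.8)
The plan is to reduce to the standard rational normal curve and then to a Vandermonde determinant computation. Recall first what must be shown: saying that the $m+1$ points are \emph{in general position} means precisely that every subset of $j+1$ of them (for $0\le j\le m$) spans a linear subspace $\PP^j$ of full dimension, or equivalently that no linear subspace $\PP^k\subseteq\PP^m$ contains more than $k+1$ of the points. So it suffices to prove that for each such subset the corresponding homogeneous coordinate vectors are linearly independent.

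First I would normalize the curve. As already used in the proof of Corollary \ref{normalbundle}, any two rational normal curves of degree $m$ in $\PP^m$ are carried into one another by a projective automorphism of $\PP^m$, and such automorphisms preserve linear subspaces and hence general position. Thus I may assume $C$ is the image of the standard parametrization $\nu:\PP^1\to\PP^m$, $[s:t]\mapsto[s^m:s^{m-1}t:\cdots:t^m]$, and the chosen points are $\nu(q_0),\ldots,\nu(q_m)$ for distinct $q_i\in\PP^1$. Since only finitely many points are involved, after a further automorphism of $\PP^1$ (which induces one of $\PP^m$) I may assume all $q_i$ lie in a single affine chart, so $q_i=[1:\lambda_i]$ with $\lambda_0,\ldots,\lambda_m\in\C$ pairwise distinct, and $\nu(q_i)$ has homogeneous coordinate vector $v_i=(1,\lambda_i,\lambda_i^2,\ldots,\lambda_i^m)$.

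Next comes the computation. Fix any subset of indices $i_0<\cdots<i_j$; I must show $v_{i_0},\ldots,v_{i_j}$ are linearly independent in $\C^{m+1}$. Forming the $(j+1)\times(m+1)$ matrix with these rows and keeping only its first $j+1$ columns yields the $(j+1)\times(j+1)$ matrix with rows $(1,\lambda_{i_r},\ldots,\lambda_{i_r}^{j})$. This is a Vandermonde matrix, whose determinant $\prod_{r<r'}(\lambda_{i_{r'}}-\lambda_{i_r})$ is nonzero because the $\lambda_i$ are pairwise distinct. Hence the big matrix has a nonvanishing maximal minor, so its rows have rank $j+1$ and are linearly independent; equivalently the $j+1$ points span a $\PP^j$ and lie in no smaller linear space. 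Since this holds for every $j$ and every subset, the $m+1$ points are in general position.

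The computation is entirely routine and there is no serious obstacle; the only point deserving care is to check the Vandermonde condition for \emph{every} subset of the points, not merely for the full collection (the latter alone would say only that the points span $\PP^m$). The truncated-Vandermonde argument handles all subsets uniformly, since discarding the coordinates of $v_i$ beyond the first $j+1$, together with the unused rows, always leaves a genuine Vandermonde matrix in the variables $\lambda_{i_r}$. As a remark, the spanning case $j=m$ also admits an intrinsic proof: a hyperplane $H\subset\PP^m$ pulls back under $\nu$ to a divisor of degree $m$ on $\PP^1$, so $H$ meets $C$ in at most $m$ points, whence no $m+1$ points of $C$ can lie on a single hyperplane.
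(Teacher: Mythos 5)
Your proof is correct and is precisely the ``direct calculation on the standard rational normal curve'' that the paper's one-line proof delegates to its reference: reduce to the standard parametrization by a projective automorphism and verify independence of every subset via a (truncated) Vandermonde determinant. The approaches are essentially the same; yours simply writes out the details.
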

\begin{proof}
It is a direct calculation on the standard rational normal curve, see \cite[Chapter I]{J}.
\end{proof}

\begin{lemm} \label{lemm1}
If $C$ is a rational normal curve of degree $m$ in $\PP^m$ passing through the points $p_1,\ldots,p_m$, then we have that
\[\Span (p_1,\ldots,p_m,T_{p_i}C)=\PP^m.\]
Suppose that $1<k<m$, $m\geq 2$. If 
\[\Span (Q_1,\ldots,Q_k, T_{Q_1}C,\ldots,T_{Q_k}C)=\PP^l\]
for distinct $k$ points $Q_1,\ldots,Q_k$ on $C$, then $l$ is greater than $k$.
\end{lemm}

\begin{proof}
Since $\Span(C)$ is a projective space $\PP^m$ and $\Span(p_1,\ldots,p_m,T_{p_i}C)$ is tangent to $C$, we have
\[\deg(C\cap \Span(p_1,\ldots,p_m,T_{p_i}C))>m.\]

If $\Span(p_1,\ldots,p_m,T_{p_i}C)=\PP^{m-1}$, then it is a contradiction with $\deg C=m$. We prove the first assertion.

Since $\Span (Q_1,\ldots,Q_k, T_{Q_1}C,\ldots,T_{Q_k}C)$ contains $k$ points and at least one tangent line of $C$, similar as above, we have \[\Span (Q_1,\ldots,Q_k, T_{Q_1}C,\ldots,T_{Q_k}C)=\PP^l\]
where $l\geq k$.

Suppose that $\Span (Q_1,\ldots,Q_k, T_{Q_1}C,\ldots,T_{Q_k}C)=\PP^k$. We can pick up $m-k-1$ points on $C$ but not in $\Span (Q_1,\ldots,Q_k, T_{Q_1}C,\ldots,T_{Q_k}C)=\PP^k$. By Lemma \ref{generalposition}, $\Span (Q_1,\ldots,Q_k, T_{Q_1}C,\ldots,T_{Q_k}C)$ with these $m-k-1$ points on $C$ spans a projective space $\PP^{m-1}$ of dimension $m-1$. Hence, we have
\[\deg(\PP^{m-1}\cap C)\geq m-k-1+2k=m+k-1>m.\]
It is a contradiction. It follows that $l$ is greater than $k$.
\end{proof}

\begin{defi} \label{degtypes}
We say that two stable maps 
\begin{center}
   $(C,f,x_1,\ldots,x_m)$ and $(C',f',x'_1,\ldots,x'_m)$
\end{center}
parametrized by $\F_t$ (see \ref{forget}) are of the same degeneration type if they satisfy the following property:
\begin{center}
$x_i$ is on the handle of $C$ iff $x'_i$ is on the handle of $C'$  
\end{center}
where a handle of a stable map parametrized by $\F_t$ is described in Proposition \ref{degenerationtype}.

\end{defi}
\begin{lemm} \label{lemm2}
Suppose that two stable maps \[(C,f,x_1,\ldots,x_m) \text{~and~} (C',f',x'_1,\ldots,x'_m)\] are parametrized by $\F_t$ $(m\geq3)$. If $$\pi_{p_i}([(C,f,x_1,\ldots,x_m)])=\pi_{p_i}([(C',f',x'_1,\ldots,x'_m)])$$
for all i, then stable maps $(C,f,x_1,\ldots,x_m) \text{~and~} (C',f',x'_1,\ldots,x'_m)$ have the same degeneration type.
\end{lemm}

\begin{proof}
The assumption \[\pi_{p_i}([(C,f,x_1,\ldots,x_m)])=\pi_{p_i}([(C',f',x'_1,\ldots,x'_m)])\]
just means $T_{p_i}f(C)=T_{p_i}f'(C')$, where $T_{p_i}f(C)$ is the tangent line to $f(C)$ at the point $p_i$, similarly, for $T_{p_i}f'(C')$, see Lemma \ref{remark2}.

By Proposition \ref{degenerationtype}, we can suppose that
\begin{center}
    $f(C)=C_1\cup L_1\cup\ldots\cup L_k$ and $f'(C')=C'_1\cup L'_1\cup\ldots \cup L'_s$
\end{center}
where $0\leq k,s\leq m$, the components $C_1,C'_1$ are rational normal curves and the components $\{L_i\}, \{L'_i\}$ are lines. We hope it will cause no confusion if we sometimes also denote $f(C)$ (resp. $f'(C')$) by $C$ (resp. $C'$). 

In the maximal degeneration case (cf. Remark \ref{rmkmaxdeg}), 
the curve $C_1$ or $C_2$ is collapsed to a point. The lemma in this case is trivial. 

Suppose that the curve $C$ is smooth. By the second assertion of Lemma \ref{lemm1}, the curve $C'$ is smooth, i.e., $s=0$. Hence, we can assume that the numbers $k,s$ are at least $1$. If both lines $L_1$ and $L'_1$ (after relabeling the teeth) are passing through the same pointed point $p_1$ with the tangent line $T_{p_1}L_1=T_{p_1}L'_1$, then we have that
\[L_1=L'_1.\]
We take out the $L_1=L'_1$ from these two stable maps. By induction on the number of the teeth (k and s), we can show the assertion.

Therefore, we can assume that the curve $C'_1$ contains the points \[p_1(\in L_1),\ldots,p_k(\in L_k).\] Note that the intersection $C_1\cap C'_1$ consists of $(m-s)-k(\geq 0)$ points which are in general position.

Suppose that the intersection $C_1\cap C'_1$ consists of the points \[\{p'_1,p'_2,\ldots ,p'_{m-s-k}\}(\subseteq \{p_1,\ldots,p_m\}).\] By Proposition \ref{degenerationtype}, we have that
\begin{center}
$\Span C_1=\PP^{m-k}$ and $\Span C'_1=\PP^{m-s}$.
\end{center}
Note that the curve $C'_1$ contains the points \{$p_1,\ldots,p_k$\}. It follows that \[\Span(C_1\cup C'_1)=\PP^m.\] Therefore, we conclude that
\[\Span C_1 \cap \Span C'_1= \PP^{(m-s)+(m-k)-m}=\PP^{m-s-k}.\]
By Lemma \ref{lemm1}, if $m-s-k$ is at least $2$, then
\[\Span(p'_1,\ldots,p'_{m-s-k}, T_{p'_1}C,\ldots,T_{p'_{m-s-k}}C )=\PP^l\]
and $l$ is greater than $m-s-k$.
However, we have \[\Span(p'_1,\ldots,p'_{m-s-k}, T_{p'_1}C,\ldots,T_{p'_{m-s-k}}C )\subseteq \Span C'_1\cap \Span C_1=\PP^{m-s-k}.\]
It is a contradiction. It follows that $m-s-k$ is at most $1$.

If $m-s-k=0$, then the intersection $\Span C_1\cap \Span C'_1$ is a point. Since the curve $C_1$ contains the points \{$p_1,\ldots,p_k$\}, the space $\Span C$ contains $L_1,\ldots, L_k$. Note that the lines $L_1,\ldots, L_k$ intersect $C_1$ at $k$ points. It follows that $k=1$. By the symmetry, we also have that $s=1$. Therefore, we conclude that $m=2$. It is a contradiction.

If $m-s-k=1$, then the intersection $\Span C_1 \cap \Span C'_1$ is $\PP^1$. It also implies that the intersection $C_1\cap C'_1$ is a point. We claim that the intersection of $\Span C'_1$ and $C_1$ consists of two points and one of them is $C_1\cap C'_1$.

In fact, we suppose that the intersection of $\Span C'_1$ and $C_1$ consists of at least 3 points. By Lemma \ref{generalposition}, the intersection $\Span C_1 \cap \Span C'_1$ contains a projective plane $\PP^2$. It is impossible. So the claim is clear.

Note that $L_1,\ldots,L_k (\subseteq \Span C'_1)$ intersect $C_1$ at distinct $k$ points $\{c_1,\ldots,c_k\}$. 
Since $C_1\cap C'_1 \cap \{c_1,\ldots,c_k\}$ is empty, we have $\{c_1,\ldots,c_k\}=\Span C'_1\cap C_1-C_1\cap C'_1$. It follows $k=1$. By the symmetry, it implies that $s=1$. Therefore, we conclude $m=1+k+s=3$. In this case, by some elementary analysis of the possible degeneration types, the two stable maps have the same degeneration type.

\end{proof}

\section{Cycle Relations} \label{sectioncyclerelation}  
Recall that the restriction $\Phi|_{\F_t}:\F_t\rightarrow \PP^{n-m}$ is a morphism, cf. Lemma \ref{lemmphift}. Denote $(\Phi|_{\F_t}) ^*(\OO_{\PP^{n-m}}(1))$ by $\lambda|_{\F_t}$. In this section, we show a cycle relation Proposition \ref{cyclerelation}. In particular, each irreducible component of the boundary divisor $\Delta_t$ of $\F_t$ is a smooth and ample divisor. Firstly, we use the results in Section \ref{sectionRNC} to show the complete linear system $|\lambda|_{\F_t}|$ separates points, cf. Proposition\ref{propseppts}.

\begin{prop} \label{propseppts} 
Let $(C,f,x_1,\ldots,x_m)$ and $(C',f',x'_1,\ldots,x'_m)$ be two stable maps parametrized by $\F_t$.
If $\pi_{p_i}([(C,f,x_1,\ldots,x_m)])=\pi_{p_i}([(C',f',x'_1,\ldots,x'_m)])$ for all $i$,
then \[(C,f,x_1,\ldots,x_m)=(C',f',x'_1,\ldots,x'_m).\]
The complete linear system $|\lambda|_{\F_t}|$ on $\F_t$ separates points. In other words, the morphism $|\lambda|_{\F_t}|:\F_t\rightarrow \PP^N$ induced by $|\lambda|_{\F_t}|$ is injective.
\end{prop}

\begin{proof}
Recall that the maps
\[\pi_{p_1}|_{\F_t},\pi_{p_2}|_{\F_t}\ldots,\pi_{p_m}|_{\F_t}\]
are induced sections of $\lambda|_{\F_t}$, see Lemma \ref{sublinear}. 
The second assertion follows from the first assertion. 

By Lemma \ref{lemm2}, the stable maps parametrized by \[(C,f,x_1,\ldots,x_m) \text{~and~} (C',f',x'_1,\ldots,x'_m)\] have the same degeneration type. So, it follows from Proposition \ref{propsametang} that $$(C,f,x_1,\ldots,x_m)=(C',f',x'_1,\ldots,x'_m)$$ if the curve $C$ or $C'$ is smooth. 

In general, by Proposition \ref{degenerationtype}, the image curves $f(C)$ and $f'(C')$ are the union of a rational normal curve and several lines. The configuration of these lines are uniquely determined by the degeneration type and the tangent directions at the pointed points. 
 By induction on the number of components of $C$ and $C'$, we show the first assertion.

\end{proof}

\begin{lemm}\label{divisorNCD}
The divisor $\Delta_t$ is a simple normal crossing divisor on $\F_t$.
\end{lemm}

\begin{proof}
Recall that the boundary divisor $\Delta$ of $\F$ is a simple normal crossings divisor in $\F$, see Lemma \ref{lemm0}. Since $t\in \M_{0,m}$ is a general point, the corollary follows from Lemma \ref{lemm0} and the generic smoothness theorem.
\end{proof}

\begin{defi} \label{definedivisor}

Let $\Delta$ be the boundary divisor of $\F$. Denote $\F_t \cap \Delta $ by $\Delta_t$. For $i\in \{1,2\ldots,m\}$, the divisor $\Delta_{i,t} (\subseteq \Delta_t)$ is the divisor whose general points are parameterizing the stable maps whose image is the union of a line containing $p_i$ and a rational normal curve, see Proposition \ref{degenerationtype}. For simplicity, we sometimes denote by $\Delta_i$ the divisor $\Delta_{i,t}$. 
\end{defi}

 Recall that $\F$ (resp. $\F_t$) has the universal family $(\pi: \U\rightarrow \F, f_{\U}:\U\rightarrow X,  \sigma_1, \ldots, \sigma_m)$ (resp. $(\pi_t: \U_t\rightarrow \F, f_{\U_t}:\U_t\rightarrow X,  \sigma_1, \ldots, \sigma_m)$) where $\sigma_i$ are disjoint sections of $\pi$. The key fact that we use to prove our main theorem is Proposition \ref{cyclerelation} which shows that $\Delta_i=\sigma_i^*\omega_{\U_t/\F_t}$. To show Proposition \ref{cyclerelation}, we reduce to prove that the equality $\Delta_i=\sigma_i^*\omega_{\U_t/\F_t}$ holds numerically, cf. Lemma \ref{numertri}. In the numerical case, we reduce the proof to a simple case in which $\F_t$ is a curve and $\U_t$ is a blow-up surface of $\F_t\times \PP^1$, see Lemma \ref{numertri} for details.

\begin{lemm}\label{lemmaI}
With the same notations as in the diagram (\ref{eq:section}) and Lemma \ref{sublinear}, we have that $\lambda|_{\F_t}=\sigma_i^*\omega_{\U_t/\F_t}$, cf. (\ref{forget}).
\end{lemm}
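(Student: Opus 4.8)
$\lambda = \sigma_i^* \omega_{\U/\F}$.

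Let me understand this statement.The plan is to identify $\lambda=\Phi^*(\OO_{\PP^{n-m}}(1))$ with $\sigma_i^*\omega_{\U/\F}$ by tracing through the very construction of the morphism $\pi_{p_i}$ and then invoking Lemma \ref{sublinear}. The essential point is that $\pi_{p_i}$ was defined, in the discussion preceding Definition \ref{defpi}, precisely by means of a surjection from a trivial bundle onto the line bundle $\sigma_i^*\omega_{\U/\F}$; so the universal property of projective space will tell us that $\pi_{p_i}^*\OO(1)$ is exactly that line bundle, and then Lemma \ref{sublinear} transports this to $\lambda$.

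First I would recall the surjection. Pulling back the composite $f^*\Omega_X\to\Omega_\U\to\Omega_{\U/\F}$ along the universal section $\sigma_i$ of the diagram (\ref{eq:section}) yields
\[
\sigma_i^*f^*\Omega_X\rightarrow\sigma_i^*\Omega_{\U/\F}.
\]
Since $f\circ\sigma_i$ is the constant map with value $p_i$, the source is the trivial bundle $(T_{p_i}X)^\vee\otimes_{\mathbb{C}}\OO_\F$; and since the image of $\sigma_i$ lies in the smooth locus of $\pi$, we may replace $\sigma_i^*\Omega_{\U/\F}$ by $\sigma_i^*\omega_{\U/\F}$. As was observed in the text, the resulting map
\[
(T_{p_i}X)^\vee\otimes_{\mathbb{C}}\OO_\F\rightarrow\sigma_i^*\omega_{\U/\F}
\]
is surjective.

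Next I would apply the universal property of $\PP(T_{p_i}X)$, taken in the convention where its points are one-dimensional subspaces of $T_{p_i}X$. The tautological sequence there reads $0\rightarrow\OO(-1)\rightarrow T_{p_i}X\otimes\OO\rightarrow Q\rightarrow 0$, whose dual is a surjection $(T_{p_i}X)^\vee\otimes\OO\rightarrow\OO(1)$. The surjection above is exactly the pullback of this one under $\pi_{p_i}$, which identifies $\pi_{p_i}^*\OO_{\PP^{n-c-1}}(1)\simeq\sigma_i^*\omega_{\U/\F}$. Finally, Lemma \ref{sublinear} gives $\pi_{p_i}^*(\OO_{\PP^{n-c-1}}(1))\simeq\Phi^*(\OO_{\PP^{n-m}}(1))=\OO_\F(\lambda)$, and combining the two isomorphisms yields $\lambda=\sigma_i^*\omega_{\U/\F}$, as claimed.

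The only genuinely delicate point is the bookkeeping of conventions: one must confirm that the surjection onto a line bundle (rather than an inclusion of a sub-line-bundle) is matched with the $\OO(1)$ normalization (rather than $\OO(-1)$) of $\PP(T_{p_i}X)$, and that $\sigma_i^*\Omega_{\U/\F}$ may legitimately be replaced by $\sigma_i^*\omega_{\U/\F}$ because $\sigma_i$ avoids the nodes of the fibers of $\pi$. Both checks are standard once the tautological sequence is written down explicitly, so I expect no substantive obstacle beyond this careful matching.
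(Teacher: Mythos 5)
Your proof is correct and follows essentially the same route as the paper: the paper dualizes and observes that $\sigma_i^*T_{\U/\F}=\pi_{p_i}^*\OO_{\PP(T_{p_i}X)}(-1)$ directly from Definition \ref{defpi}, then applies Lemma \ref{sublinear}, whereas you phrase the identical identification via the surjection $(T_{p_i}X)^\vee\otimes\OO_\F\rightarrow\sigma_i^*\omega_{\U/\F}$ and the universal property of $\PP(T_{p_i}X)$. The two are dual formulations of the same two-step argument, and your care about the smooth-locus identification $\sigma_i^*\Omega_{\U/\F}=\sigma_i^*\omega_{\U/\F}$ matches the paper's first line.
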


\begin{proof}
Since the image of $\sigma_i$ is in the smooth locus of $\pi_t$, we have that
\[\sigma _i^*\omega_{\U_t/\F_t}^{\vee}=\sigma _i^*(\Omega^1_{\U_t/\F_t})^{\vee}=\sigma _i^*T_{\U_t/\F_t}.\]
It follows from Definition \ref{defpi} 
that $\sigma _i^*T_{\U_t/\F_t}=(\pi_{p_i}|_{\F_t})^*\OO_{\PP(T_{p_i}X)}(-1)$. Lemma \ref{sublinear} implies that $\sigma_i^* T_{\U_t/\F_t}=-\lambda|_{\F_t}$. We prove this lemma.

\end{proof}

\begin{lemm} \label{numertri}
The line bundle $\sigma_i^*\omega_{\U_t/\F_t}\otimes \OO_{\F_t}(-\Delta_i)$ on $\F_t$ is numerically trivial. In particular, the divisor $\Delta_{i,t}$ is an ample divisor on $\F_t$.

\end{lemm}

\begin{proof}
Let $B$ be a smooth projective curves on $\F_t$ such that $B$ and $\Delta_i$ intersect transversely at general points of $\Delta_i$ for $i=1,\ldots ,m$. We first show that $$B\cdot\sigma_i^*\omega_{\U_t/\F_t}=B\cdot \Delta_i$$.

Denote the restriction of $\U_t\rightarrow \F_t$ to $B$ by $C\rightarrow B$. Namely, we have a cartesian diagram as follows,
\[\xymatrix {C\ar[r]^g \ar@{}[rd]|-{\square} \ar[d]_{\pi_C}& \U_t \ar[d]^{\pi_t} \\
B\ar[r]_h \ar@/^2pc/[u]^{\widehat{\sigma_k}} &\F_t\ar@/_2pc/[u]_{\sigma_k} }\]
where the section $\widehat{\sigma_k}$ of $\pi_C$ is the pullback of the section $\sigma_k$ of $\pi_t$ with $g\circ \widehat{\sigma_k}=\sigma_k\circ h$. 

We claim that $C$ is smooth surface. In fact, we recall that $\Delta_t$ is a simple normal crossing divisor on $\F_t$, cf. Lemma \ref{divisorNCD}. It is clear that $\U_t$ is a family of semistable curves over $\F_t$ with discriminant locus $\Delta_t$. Note that $B$ and $\Delta_t(= \cup \Delta_{i,t})$ intersect transversely. Therefore, around a nodal point of some fiber $C_b$ of $\pi_C$ over $b\in B$, $C$ is defined by the equation $xy=s$ where $s$ is the uniformizer of $B$ at $b$ and $b\in B\cap\Delta_t$. By local calculations, the surface $C$ is smooth. 

Recall that general points of $\Delta_i$ parameterize the stable maps whose image is the union of a line containing $p_i$ and a rational normal curve, cf. Definition \ref{definedivisor}. Note that $B$ intersects with $\Delta_i$ at general points of $\Delta_i$. Therefore, the fibers of $\pi_C$ are either a smooth rational curve or the union of two smooth rational curves intersecting transversely at a point. The fiber $C_s$ of $\pi_C$ over $s\in \Delta_t \cap B$ has two components $L_s$ and $Q_s$ where $L_s$ is the unique component of the stable map parametrized by $s$ whose image in $X$ is a line and $Q_s$ is the rest component. It is clear that \[C_s=L_s+Q_s, C_s\cdot L_s=0 \text{~and~} L_s\cdot Q_s=1.\] It follows that $L_s$ is $(-1)$-curve. By Castelnuovo's contraction theorem, we have a map $\phi:C \rightarrow C'$ contracting $(-1)$-curves $L_s$ ($s\in \Delta_t\cap B$) and the map $\pi_C$ factors through $\phi$, i.e., we have that $\pi_C=q\circ \phi$ $\xymatrix{: C \ar[r]^{\phi} & C' \ar[r]^q &B}$. It is clear that $\phi \circ \widehat{\sigma_k}$ ($k=1,\ldots,m$) give at least three disjoint sections of $q:C'\rightarrow B$ and $C'/B$ is a $\PP^1$-bundle. It follows that the $\PP^1$-bundle $C'/B$ is trivial, i.e., $C'\cong B\times \PP^1$.

Now, we are able to express the dualizing sheaf $\omega _{C/B}$ in an explicit way. Namely, we have
\begin{align*}
\omega_{C/B}=K_C\otimes \pi_C^*K_B^{-1}&=\left(\phi^*K_{C'}\otimes \OO_C(\sum\limits_{i=1}^m\sum\limits_{s\in \Delta_i\cap B} L_s)\right)\otimes   \pi_C^*K_B^{-1}\\
&=\phi^*\omega_{C'/B}\otimes \OO_C(\sum\limits_{i=1}^m\sum\limits_{s\in \Delta_i\cap B} L_s)
\end{align*}
where the second equality in the first line is due to the blow-up formula for the canonical bundle. 

 Note that the composition \[\xymatrix{B\ar[r]^{\widehat{\sigma_k}} &C \ar[r]^{\phi}& C'\ar@{=}[r] &B\times \PP^1 \ar[r]^{pr_2} &\PP^1 }\] is a constant morphism and $\omega_{B\times \PP^1/B}=pr_2^*(\omega_{\PP^1})$. It follows that 
 \begin{align*}
 \widehat{\sigma_k}^* \omega_{C/B}&=\widehat{\sigma_k}^*\left(\phi^* pr_2^*(\omega_{\PP^1})\right)\otimes \widehat{\sigma_k}^* \left( \OO_C(\sum\limits_{i=1}^m\sum\limits_{s\in \Delta_i\cap B} L_s) \right)\\
 &=\OO_B\left(\sum\limits_{s\in \Delta_k\cap B} L_s \cdot \widehat{\sigma_k}(B) \right)\\
 &=\OO_B(\sum\limits_{s\in \Delta_k\cap B} s)=B\cdot \Delta_k. 
 \end{align*}
where the equalities in the second and third lines are due to the facts that $\widehat{\sigma}_k(B)$ does not intersect $L_s$ for $s \not\in \Delta_k\cap B$ and intersect $L_s$ transversely for $s \in \Delta_k\cap B$.
On the other hand, it follows from the base change property of dualizing sheaves that \[B\cdot \sigma_k^*(\omega_{\U_t/\F_t})=h^*\sigma_k^*(\omega_{\U_t/\F_t})=\widehat{\sigma_k}^*g^*\omega_{\U_t/\F_t}=\widehat{\sigma_k}^* \omega_{C/B}.\]
Therefore, we conclude that $B\cdot \left(\sigma_k^*\omega_{\U_t/\F_t}-\Delta_k\right)$ is zero.

In general, to show the proposition, it suffices to prove that the line bundle $\sigma_i^*\omega_{\U_t/\F_t}\otimes \OO_{\F_t}(-\Delta_i)$is numerically trivial on the curves given by complete intersections of a very ample divisor, cf. \cite[Page 69 Chapter 3 Remark 3.8]{Deb}. By the theorem of Bertini, we reduce to the case for which the curve is smooth and intersects with $\Delta_i$ at general points of $\Delta_i$ transversely, which we have already proved.

By Proposition \ref{propseppts} and Lemma \ref{lemmaI}, we know that $\sigma_i^*\omega_{\U_t/\F_t}$ is ample. Therefore, it follows from Kleiman's ampleness criterion that the divisor $\Delta_i$ is ample.

\end{proof}

\begin{prop}\label{cyclerelation}
Suppose that $n-m(\sum\limits_{i=1}^c d_i-c)-c\geq 2$. Then $\F_t$ is nonempty with $\OO_{\F_t}(\Delta_{i,t})=\lambda|_{\F_t}$. 
\end{prop}

\begin{proof}
Let $D$ be the space of $m$ lines in $X$ such that each line contains exactly one point among the points $p_1,\ldots,p_m$ and the lines intersect at a point. It is clear that $D$ is a non-singular complete intersection in $\PP^n$ of type
\begin{equation}\label{eq:equations}
(T_2(d_1,m),(T_2(d_2,m), \ldots, T_2(d_c,m)),
\end{equation}
see \cite[Page 83 (2)]{DS2}. 
Hence, the dimension of $D$ is $n-m(\sum\limits_{i=1}^c d_i-c)-c$ which is at least $2$.

 Let $t \in \M_{0,m}$ be a general point parametrizing a smooth rational curve with $m$ pointed points. Denote this pointed rational curve by $(R,y_1,\ldots, y_m)$ where $R$ is a rational curve and $\{y_i\}$ are the marked points on $R$. Let $u\in D$ be the point representing the union of lines $l_1\cup l_2\cup\ldots\cup\l_m$ such that $p_i\in l_i$ and the lines $\{l_i\}$ intersect at a point $Q$. We can canonically associate to $u$ the stable map $(C,f,x_1,\ldots,x_m)$ of maximal degeneration type (cf. Lemma \ref{lemmsection}) in $\F_t$ as follows:
\begin{enumerate}
   \item the domain $C$ is $R\coprod l_1\coprod l_2 \ldots \coprod \l_m$ identifying $Q(\in l_i)$ with  $y_i$, and
  \item $f(x_i)=p_i$, and
  \item the map $f$ maps $l_i$ identically to the line $l_i \subseteq X$ and collapses $R$ to the point $Q$.
\end{enumerate}

Therefore, it gives rise to a morphism
\begin{equation} \label{eqmap}
 \psi:D \rightarrow Y=\bigcap\limits_{i=1}^m\Delta_{i,t}\subseteq \F_t.
\end{equation}
This morphism is bijective. Therefore, it is an isomorphism by the smoothness and irreducibility of $D$ and Lemma \ref{divisorNCD}. Note that 
\begin{align*}
dim(\F_t)&=dim(\F)-dim(\M_{0,m})= (c+1-\sum\limits_{i=1}^c d_i)m+n-c.
\end{align*}
by Lemma \ref{lemm0}
i.e., $dim(Y)=dim(D)=dim(\F_t)-m$. 

Since the divisors $\Delta_i$ are ample, the intersections $\bigcap\limits_{i=1}^s\Delta_i$ are smooth and connected for $s=1,\ldots,m$.
Since $D$ is a smooth complete intersection of dimension at least $2$ in a projective space, we have that $H^2(Y,\mathbb{Z})$ is torsion-free and $H^1(Y,\OO_Y)=0$. By the Lefschetz hyperplane theorem and induction on s, we have that $H^1(\F_t,\OO_{\F_t})=0$, $H^2(\F_t,\mathbb{Z})$ is torsion-free and the map of the first Chern class $c_1:\Pic(\F_t)\rightarrow H^2(\F_t,\mathbb{Z})$ is injective. It follows that $\OO_{\F_t}(\Delta_{i,t})=\lambda|_{\F_t}$ by Lemma \ref{numertri} and the Poincar\'e duality theorem. The proposition follows from Lemma \ref{lemmaI}.

\end{proof}

From the proof of Proposition \ref{cyclerelation}, we conclude the following corollary. 
\begin{cor}\label{corintersectype}
Suppose that $Y$ is the locus in $\F_t$ parametrizing the stable maps of maximal degeneration type, i.e., the image of the stable map is the union of $m$ lines which intersect at a point. Assume that $n+m(c-\sum\limits_{i=1}^c d_i)-c\geq 2$.
\begin{itemize}
\item The intersection $\bigcap\limits_{i=1}^{s}\Delta_{i,t}$ is smooth and connected for $s=1,\ldots m$. The locus $Y$ is $\bigcap\limits_{i=1}^{m}\Delta_{i,t}$ of dimension $n+m(c-\sum\limits_{i=1}^c d_i)-c$.  
\item   Via the map  $\Phi|_{Y}:Y \hookrightarrow \PP^{n-m}$, the variety $Y$ is a complete intersection in $\PP^{n-m}$ defined by the disjoint union of the homogeneous polynomials of type (see \ref{hompoly})\[(T_1(d_1,m), T_2(d_2,m),\ldots, T_2(d_c,m)).\]
\end{itemize}
\end{cor}

\begin{proof}
The first assertion follows from the proof of Proposition \ref{cyclerelation}.
With the same notations as in the proof of Proposition \ref{cyclerelation}, we recall that $D$ is a smooth complete intersection in $\PP^n$ of type as (\ref{eq:equations}). To show the second assertion, we consider the following map
\[\xymatrix{
Y=D \ar@{^(->}[r]^i & \PP^n\ar@{.>}[r]^{pr} &\PP^{n-m}
}
\]
where $i$ is the natural inclusion and $pr$ is the projection from \[\Span(p_1,\ldots,p_m)=\PP^{m-1}\] to a projective subpace $\PP^{n-m}$ of $\PP^n$ disjoint from $\Span(p_1,\ldots,p_m)$.  We can choose this $\PP^{n-m}$ to be the intersection of $m$ hyperplanes defined by linear forms in $T_2(d_1,m)$. To be precise, these $m$ hyperplanes are projective hyperplanes tangent to the hypersurface $F_1=0$ at $p_1,\ldots,p_m$ respectively, where $F_1$ is a homogeneous polynomial of degree $d_1$ for defining $X$ as a complete intersection $X=V(F_1,\ldots, F_c)$.

We claim that the composition $pr\circ i$ is the morphism $\Phi|_Y$. Therefore, the second assertion follows. 

In fact, let $(C,f,x_1,\dots, x_m)$ be a stable map parametrized by $Y$. Suppose that the image $f(C)$ of $C$ is the union of lines $l_1\cup l_2\cup \ldots \cup l_m$ such that the lines $l_i$ intersect at $Q$. The projective space $\Span(f(C))$ is given by $\Span(p_1, \ldots, p_m, Q)$. Note that $D$ is the intersection $\bigcap\limits^m_{i=1}L_i$ of $L_i$ where $L_i$ is the union of lines on X passing through $p_i$. Therefore, $Q$ is on $D$ and $\psi(Q)=[(C,f,x_1,\dots, x_m)]$, see (\ref{eqmap}). Therefore, we have that
\begin{align*}
pr\circ i ([(C,f,x_1,\dots, x_m)] )&=pr (Q)=\Span(p_1,\ldots,p_m,Q)\cap \PP^{n-m}\\
&=\Span(f(C))\cap \PP^{n-m}.
\end{align*} 
On the other hand, we can identify $\PP^n/\Span(p_1,\ldots,p_m)$ with $\PP^{n-m}$ by sending a point \[P\in \PP^n/\Span(p_1,\ldots,p_m)\] parametrizing a projective space $P$ of dimension $m$ and containing $\Span(p_1,\ldots,p_m)$ to the point$P\cap \PP^{n-m}$. Under this identification, we conclude that $$pr\circ i([(C,f,x_1,\dots, x_m)] )=\Phi|_Y([(C,f,x_1,\dots, x_m)] ).$$ 



\end{proof}

 \section{Embedding Maps} \label{sectionembd}

In the section, we show that the map induced by $|\lambda|_{\F_t}|$ is an embedding, see Proposition \ref{propembed}. Using Lemma \ref{lemmaembed} and the deformation theory of stable maps, we show that the complete linear system $|\lambda|_{\F_t}|$ is unramified, see Proposition \ref{propembed}.

The following lemma helps us to show that $|\lambda|_{\F_t}|$ separates tangents.

\begin{lemm} \label{lemmaembed}
Let $Y$ be a smooth projective manifold with a line bundle $L$ and irreducible divisors $D_i$ for $i=1,\ldots k$. If the following conditions are satisfied
\begin{itemize}
\item the complete linear system of $L$ defines a morphism \[h:Y\rightarrow \PP(H^0(Y,L)^{\vee})=\PP^N\] with $D_i\in |L|$ and $D_i$ are reduced,
\item the differential $Dh:T_Y\rightarrow h^*T_{\PP^N}$ of $h$ is injective over $Y-(\bigcup\limits_{i=1}^m D_i$),
\item the restriction $Dh|_{T_{D_i,q}}: T_{D_i,q}\rightarrow T_{\PP^N,h(q)}$ of $Dh$ to a general point of $D_i$ for each $i$ is injective,

\end{itemize}
then the morphism $h$ is unramified (i.e., the differential map $dh$ is injective).
\end{lemm}
\begin{proof}
Let $g$ be a map between two complex manifolds $g:Y\rightarrow Z$. Recall that the ramfied locus of $g$ on $Y$ is of codimension one or $Y$. Therefore, by the second condition, it suffices to show that the differential $Dh|_{T_{Y,s}}:T_{Y,s}\rightarrow T_{\PP^N,s}$ is injective at a general point $s$ of $D_i$ for each $i$. 

Suppose that there is a nonzero vector $v\in T_{Y,s}$ with $Dh(v)=0$ where $s$ is a general point of $D_i$. We claim that it is impossible. 

In fact, note that $D_i$ is generically smooth. The third condition implies that $v$ and $T_{D_i,s}$ intersect transversally. We choose a small disk $\mathbb{D}(\subseteq \C)$ in $Y$ such that its tangent vector at $0\in \mathbb{D}$ is $v$, i.e., we have an embedding $i:\mathbb{D} \hookrightarrow Y$ with $Di(w)=v$ for a nonzero tangent vector $w$ to $\mathbb{D}$ at $0\in \mathbb{D}$ ($i(0)=s$). Therefore, the disk $\mathbb{D}$ and $D_i$ intersect transversally at $0\in \mathbb{D}$. In other words, the function $i^*(G)$ has only simple zero at $0\in \mathbb{D}$ where $G$ is the local function around $s\in Y$ defining $D$.

 On the other hand, let $H$ be the hyperplane in $\PP^N$ with $h^{-1}(H)=\Delta$. Suppose that $H$ is locally defined by $L=0$ around $h(s)$. The equation $h^*(L)=0$ defines $\Delta$ locally. We may assume $h^*(L)=G$. Since we have $D(h\circ i)(w)=Dh(v)=0$, the order of the zero of the function $i^*(h^*(L))=i^*(G)$ is at least two at $0\in \mathbb{D}$. It is a contradiction.

\end{proof}

We should discuss the deformation theory of stable maps. The general reference for the deformation problems of maps is \cite{I}. Specializing in the case of stable maps, we refer to \cite[Page 61]{GHS}, \cite{BF} and \cite{B}.
Let $f:C\rightarrow X$ be an unmarked stable map. By \cite{BF} and \cite{B}, the space of first-order deformations and the obstruction group are given by the hypercohomology groups
\begin{align*}
\Def(f)=\mathbb{H}^1(C,\mathbb{R}\Hom_{\OO_C}(\Omega^{.}_f,\OO_C)),\\
\Obs(f)=\mathbb{H}^2(C,\mathbb{R}\Hom_{\OO_C}(\Omega^{.}_f,\OO_C)).
\end{align*}
where $\Omega^{.}_f$ is the complex
$\xymatrix{f^*\Omega_X^1 \ar[r]^{df}&\Omega_C^1 } .$

If $f$ is an embedding , then $\mathbb{R}\Hom_{\OO_C}(\Omega^{.}_f,\OO_C)\cong N_f[-1]$where $N_f$ is the normal bundle of $f$, and the space of the first order deformations of a stable map $(C,f,x_1,\ldots,x_m)$ is $H^0( C, N_f)$.
Furthermore, the space of the first order deformations of $(C,f,x_1,\ldots,x_m)$ fixing the points $\{x_i\}_{i=1}^m$ is given by 
\begin{equation}\label{deformation}
H^0\left(C,N_f(-\sum\limits_{i=1}^m x_i)\right).
\end{equation}

\begin{prop} \label{propembed}
With the same notations as Proposition \ref{propseppts}, the complete linear system of $\lambda|_{\F_t}$ separates tangent vectors of $\F_t$, i.e., the differential of the map induced by the complete linear system $|\lambda|_{\F_t}|$ is injective. In particular, together with Proposition \ref{propseppts}, the morphism $|\lambda|_{\F_t}|$ is a closed embedding.
\end{prop}

\begin{proof}

We apply Lemma \ref{lemmaembed} to $Y=\F_t$, $D_i=\Delta_i$ and $L=\lambda|_{\F_t}$ where $\{D_i\}$ are irreducible components of $\Delta_t$. It suffices to check the conditions of Lemma \ref{lemmaembed}. The first condition is verified by Lemma \ref{remark2}, Corollary \ref{corintersectype} and Proposition\ref{cyclerelation}. 

For the second condition, we claim that the morphism $h$ induced by $|\lambda|_{\F_t}|$ is unramified on $U=\F_t-(\bigcup\limits_{i=0}^m \Delta_i)$. In fact, by the fact that $\pi_{p_i}^*(\OO_{\PP(T_{p_i}X)}(1))=\lambda|_{\F_t}$, cf. Lemma \ref{sublinear}, it suffices to show that the morphism\[
\xymatrix{
\F_t \ar[rrr]^(0.35){\pi_{p_1}\times\pi_{p_2}\times \ldots,\times \pi_{p_m}}  & & & \PP(T_{p_1}X)\times\ldots\times\PP(T_{p_m}X).
}
\]
is unramified on $U$. The kernel of the differential $D(\pi_{p_1}\times\pi_{p_2}\times \ldots,\times \pi_{p_m})$ at the point $(C,f,x_1,\ldots,x_m)\in \F_t$ is a subspace of the vector space\[\Def_{(C,f,x_1,\ldots,x_m) }\] parametrizing the first order deformations of a stable map $(C,f,x_1,\ldots,x_m)$ fixing both the points $\{x_i\}_{i=1}^m$ and the tangent directions at $\{x_i\}$. Similarly, as (\ref{deformation}), we have that 
\begin{equation}\label{eqdeform}
\Def_{(C,f,x_1,\ldots,x_m) }=H^0(C, N_{C/X}(\sum\limits_{i=1}^m -2x_i))
\end{equation}
if $f$ is an embedding, where $N_{C/X}$ is the normal bundle $N_f$ of $f:C\hookrightarrow X$. Note that $H^0(C, N_{C/X}(\sum\limits_{i=1}^m -2x_i))=0$ if $m\geq 3$ and $C$ is smooth by Corollary \ref{corovanish}. 

Now, we verify the third condition of Lemma \ref{lemmaembed}. It is obvious that the morphisms $h|_{\Delta_i}:\Delta_i \rightarrow \PP^N$ ($i=1,\ldots,m$) are generically unramified since the map $h=|\lambda|_{\F_t}|$ is injective by Proposition \ref{propseppts}. The third condition holds.

In summary, we prove the proposition.
\end{proof}

\section{The Main Theorem} \label{sectionmainthm}

In this section, we prove Theorem \ref{mainthm}. An important ingredient is a criterion (Proposition \ref{mainprop}) for characterizing when a smooth projective variety is a complete intersection in a projective space. Note that $\F_t$ contains a subvariety $Y$ which is a complete intersection $\bigcap\limits_{i=1}^m\Delta$ in $\F_t$. Note that $\Delta_i$ are very ample divisors and $Y$ is a complete intersection in a projective space, see Corollary \ref{corintersectype}. Theorem \ref{mainthm} follows from Corollary \ref{corintersectype} and Proposition \ref{mainprop}, see the proof at the end of this section for more details.

\begin{prop}\cite[Proposition 7.3]{PAN1} \label{mainprop}
 Suppose that we have smooth projective varieties $\Delta$ and $\F$ in $\PP^N$. Assume that
\begin{enumerate}
  \item the variety $\Delta$ is a smooth divisor of $\F$ with dimension at least $1$,
  \item the divisor $\Delta$ is a complete intersection in $\mathbb{P}^N$ of type $(d_1,\ldots,d_c)$ with $d_i\geq1$,
   \item the divisor $\Delta$ is the schematic intersection of $\F$ and a hypersurface of degree $d_1$ in $\PP^N$.
\end{enumerate}
Then, the smooth variety $\F$ is a complete intersection of type $(d_2,\ldots, d_c)$ in $\mathbb{P}^N$.
\end{prop}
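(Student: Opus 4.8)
The plan is to recover the defining equations of $\F$ from those of $\Delta$ by correcting the complete-intersection equations of $\Delta$ modulo the degree-$d_1$ form that cuts out $\Delta$ on $\F$, and then to identify $\F$ with the resulting complete intersection by a degree comparison. Write $G$ for the degree-$d_1$ form with $I_{\Delta/\F}=(G|_{\F})$ (hypothesis (4)), and let $g_1,\dots,g_c$ ($\deg g_i=d_i$) be a regular sequence cutting out the complete intersection $\Delta$ (hypothesis (3)). The bookkeeping is forced: since $\Delta$ is a divisor in $\F$ we have $\dim\F=\dim\Delta+1=N-(c-1)$, and since $\Delta=\F\cap V(G)$ is a degree-$d_1$ hypersurface section, $\deg\Delta=d_1\deg\F$; combined with $\deg\Delta=d_1\cdots d_c$ this gives $\deg\F=d_2\cdots d_c$. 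The central exact sequence I will use throughout is
\[
0\to I_{\F}\to I_{\Delta}\to \OO_{\F}(-d_1)\to 0,
\]
obtained from $I_{\Delta}/I_{\F}=I_{\Delta/\F}=\OO_{\F}(-d_1)$.

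The crux — and the step I expect to be the main obstacle — is to prove that $\F$ is projectively normal, i.e. $H^1(\PP^N,I_{\F}(k))=0$ for all $k$, since this is exactly what allows me to lift sections from $\F$ to $\PP^N$. I will prove this by a bootstrap on $k$ with step $d_1$. For $k\le 0$ the vanishing is elementary: for $k<0$ both $H^0(\OO_{\PP^N}(k))$ and $H^0(\OO_{\F}(k))$ vanish (as $\OO_{\F}(1)$ is ample and $\F$ reduced), and for $k=0$ connectedness of $\F$ makes $H^0(\OO_{\PP^N})\to H^0(\OO_{\F})$ an isomorphism, so $H^1(I_{\F}(k))=0$ from $0\to I_{\F}(k)\to\OO_{\PP^N}(k)\to\OO_{\F}(k)\to 0$. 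For the inductive step I twist the central sequence by $\OO(k)$ and take cohomology; because $\Delta$ is a complete intersection it is projectively normal, so $H^1(I_{\Delta}(k))=0$ and hence $H^1(I_{\F}(k))\cong\mathrm{coker}\big(H^0(I_{\Delta}(k))\to H^0(\OO_{\F}(k-d_1))\big)$. The subspace $G\cdot H^0(\OO_{\PP^N}(k-d_1))\subseteq H^0(I_{\Delta}(k))$ maps onto the image of the restriction $\rho\colon H^0(\OO_{\PP^N}(k-d_1))\to H^0(\OO_{\F}(k-d_1))$, whose cokernel is $H^1(I_{\F}(k-d_1))$; therefore there is a natural surjection $H^1(I_{\F}(k-d_1))\twoheadrightarrow H^1(I_{\F}(k))$. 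Since $d_1\ge 1$, the induction hypothesis kills the source, giving $H^1(I_{\F}(k))=0$ for all $k$.

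With projective normality in hand the lifting is routine. Each $g_i|_{\F}$ vanishes on $\Delta=\{G|_{\F}=0\}$, so $g_i|_{\F}=G|_{\F}\cdot h_i$ for some $h_i\in H^0(\OO_{\F}(d_i-d_1))$; by projective normality $h_i$ is the restriction of a global form $\tilde h_i$ on $\PP^N$, and I set $f_i:=g_i-G\tilde h_i$ for $i=2,\dots,c$. Then $f_i|_{\F}=0$, so $f_i\in H^0(\PP^N,I_{\F}(d_i))$, while $f_i\equiv g_i\ (\mathrm{mod}\ G)$ gives $V(G,f_2,\dots,f_c)=V(G,g_2,\dots,g_c)=\Delta$.

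Finally I identify $\F$ with $Z:=V(f_2,\dots,f_c)$. A dimension count shows $f_2,\dots,f_c$ is a regular sequence: if some component of $Z$ had dimension $>N-(c-1)$ it would meet $V(G)$ in dimension $>N-c=\dim\Delta$, contradicting $Z\cap V(G)=\Delta$; hence $\dim Z=N-(c-1)$ and $Z$ is a complete intersection of type $(d_2,\dots,d_c)$, so $\deg Z=d_2\cdots d_c$. Now $\F\subseteq Z$ because the $f_i$ vanish on $\F$, and both are equidimensional of dimension $N-(c-1)$ with $\deg\F=d_2\cdots d_c=\deg Z$. Comparing cycles, $\F\subseteq Z$ together with the degree equality forces every component of $Z$ to be a component of the reduced variety $\F$ with multiplicity one; since $Z$, being a complete intersection, is Cohen–Macaulay (no embedded components) and is now generically reduced, it is reduced, whence $\F=Z=V(f_2,\dots,f_c)$ is a complete intersection of type $(d_2,\dots,d_c)$ in $\PP^N$, as claimed.
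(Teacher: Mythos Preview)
The paper does not prove this proposition in-line; it is quoted without argument from \cite[Proposition~7.1]{PAN1}, so there is no proof here to compare yours against. Your argument is correct and follows the expected route: establish projective normality of $\F$ from that of $\Delta$ via the ideal-sheaf sequence, lift equations, and finish with a degree comparison.

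One step deserves an extra sentence of justification. You assert $V(G,g_2,\dots,g_c)=\Delta$, which tacitly uses that $G$ may be taken as the first member of the regular sequence for $\Delta$, i.e.\ that $G\notin\mathfrak m\,I_\Delta$. This does follow from the hypotheses, but not automatically from how you introduced $G$ and the $g_i$ as unrelated: for each $i$ with $d_i<d_1$ one has $g_i|_{\F}\in H^0(\F,I_{\Delta/\F}(d_i))\cong H^0(\OO_{\F}(d_i-d_1))=0$, so $\F\subseteq V(g_i:d_i<d_1)$; thus if $G$ lay in $(g_i:d_i<d_1)$ --- the only way it could fail to be a minimal generator of $I_\Delta$ --- then $G|_{\F}=0$, contradicting hypothesis~(4). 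Once you note this (or simply set $g_1:=G$ at the outset, which the above shows is legitimate), the equality $V(G,g_2,\dots,g_c)=\Delta$ is immediate and the rest of your argument goes through.
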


\begin{prop} \label{propembedding}
Use the same notations as in Corollary \ref{corintersectype}. If $n+m(c-\sum\limits_{i=1}^c d_i)-c\geq 2$, then the map
\[|\lambda|_{\F_t}|:\F_t \rightarrow \PP^N\]
is an embedding and $\dim H^0(Y,\OO_Y(1))=n-mc$ where $N$ is $\dim |\lambda|_{\F_t}|$. 

 Moreover, the varieties $\F_t$ and $\bigcap\limits_{i=1}^k\Delta_{i,t}$ ($1\leq k\leq m$) are smooth complete intersections via this embedding.
\end{prop}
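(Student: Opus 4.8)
The plan is to build the proof inductively using Proposition \ref{mainprop}, starting from the maximal degeneration locus $Y=\bigcap_{i=1}^m\Delta_{i,t}$ and climbing up the tower of intersections $\bigcap_{i=1}^k\Delta_{i,t}$ as $k$ decreases from $m$ to $0$. By Corollary \ref{corintersectype} we already know that $Y$ is a smooth complete intersection in $\PP^{n-m}$ whose defining equations are precisely $T_1(d_1,m)$ together with $T_2(d_2,m),\ldots,T_2(d_c,m)$, and that its dimension is $n+m(c-\sum_i d_i)-c\geq 1$. The hypothesis on the dimension guarantees that every intermediate intersection still has dimension at least $1$, which is exactly the numerical input Proposition \ref{mainprop} requires.

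First I would set up the induction precisely. By Corollary \ref{corintersectype} the divisors $\Delta_{i,t}$ are irreducible, and by Proposition \ref{cyclerelation} each $\Delta_{i,t}$ is linearly equivalent to $\lambda|_{\F_t}$; via the embedding $|\lambda|_{\F_t}|$ (established to be a closed embedding in Propositions \ref{propseppts} and \ref{propembed}) this means each $\Delta_{i,t}$ is cut out by a single hyperplane section of $\PP^N$. This is the crucial point that lets me apply Proposition \ref{mainprop}: at each stage the smaller intersection is a Cartier divisor on the larger one, defined by restricting a single linear form, and it sits inside a complete intersection of known type. Running the induction, I peel off one hyperplane (equivalently, one factor from the projective-space side) at each step, and the residual defining equations reorganize from the $T_2$ type of $Y$ into the $T_1$ type claimed for $\F_t$ in the main theorem. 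The bookkeeping here is to track how the $m$ linear forms carving $Y$ out of $P=\PP^{n-m}$ inside $\PP^N$ get absorbed as $k$ increases, so that the final object $\F_t=\bigcap_{i=1}^0\Delta_{i,t}$ lives in $\PP^N=\PP^{n-m(c-1)}$ with defining type $T_1(d_1,m),\ldots,T_1(d_c,m)$.

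The dimension count $\dim H^0(Y,\OO_Y(1))=n-mc$ I would obtain from the fact that $Y$ is a complete intersection in $\PP^{n-m}$ of the stated type: since $Y$ is projectively normal and linearly nondegenerate (Corollary \ref{corcha3} applied in the ambient space via the explicit equation list), its linear system of hyperplane sections has dimension $n-m-1-(\text{number of linear equations among the }T_2)$, and a direct count of the degree-one forms appearing in $T_2(d_2,m),\ldots,T_2(d_c,m)$ gives $m(c-1)$ such forms, yielding $h^0(Y,\OO_Y(1))=(n-m)-m(c-1)=n-mc$; equivalently this pins down $N=\dim|\lambda|_{\F_t}|=n-m(c-1)$ as asserted.

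The main obstacle I expect is the inductive application of Proposition \ref{mainprop}, specifically verifying at each stage that the smaller intersection $\bigcap_{i=1}^{k}\Delta_{i,t}$ is genuinely a complete intersection of the correct type inside $\PP^N$ and is cut out from $\bigcap_{i=1}^{k-1}\Delta_{i,t}$ by a single homogeneous polynomial restricted from the ambient space. Proposition \ref{mainprop} runs the implication in the direction ``divisor complete intersection $\Rightarrow$ ambient variety complete intersection,'' so I must be careful that the hypothesis identifying $\Delta_{i,t}$ as the zero locus of one linear form on the larger variety holds uniformly; this is where Proposition \ref{cyclerelation} ($\Delta_{i,t}\sim\lambda|_{\F_t}$) does the heavy lifting, but matching the recorded equation types $T_1$ versus $T_2$ at the boundary case $k=m$ against Corollary \ref{corintersectype}, and confirming smoothness survives each step (via the simple normal crossings property of $\Delta_t$ and generic smoothness), will require the most care.
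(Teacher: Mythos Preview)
Your proposal is correct and follows essentially the same route as the paper: start from $Y=\bigcap_{i=1}^m\Delta_{i,t}$, use Corollary~\ref{corintersectype} to identify it as a complete intersection (whence the computation of $h^0(Y,\OO_Y(1))$ via Lemma~\ref{lemcha3}), and then apply Proposition~\ref{mainprop} inductively from $k=m$ down to $k=0$, using Proposition~\ref{cyclerelation} to certify that each $\Delta_{i,t}$ is a hyperplane section under $|\lambda|_{\F_t}|$. One small overreach: the determination $N=n-m(c-1)$ is not part of this proposition but of the next one, so your ``equivalently this pins down $N$'' should be deferred.
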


\begin{proof}
By Corollary \ref{corintersectype}, we have an embedding $\Phi|_Y:Y\hookrightarrow \PP^{n-m}.$
The image of this embedding is in a projective space $\PP^{n-mc}$. Moreover, it is a complete intersection in $\PP^{n-mc}$ of type\\
\begin{equation} \label{eq:equations1}
(T_1(d_1,m), T_1(d_2,m),\ldots, T_1(d_c,m)).
\end{equation}
\\
Therefore, it follows from Lemma \ref{lemcha3} that dim $H^0(Y,\OO_Y(1))=n-mc$. We consider the following embedding
\[|\lambda|_{\F_t}|_{Y}:Y \rightarrow \PP^N.\]
The space $Y$ is a complete intersection in $\PP^N$ defined by the union of some hyperplanes and some polynomials of type (\ref{eq:equations1}). Note that $Y=\bigcap\limits_{i=1}^{m} \Delta_{i,t}$ and $$n+m(c-\sum\limits_{i=1}^{c} d_i)-c\geq 2.$$ It follows that $\dim Y\geq 1$. The intersection  $\bigcap\limits_{i=1}^{k}\Delta_{i,t}$ is a smooth projective variety for $1\leq k \leq m$. Hence, by Proposition \ref{cyclerelation} and Proposition \ref{propembed}, we can apply Proposition \ref{mainprop} to $\bigcap\limits_{i=1}^{k}\Delta_{i,t}$ inductively, from $k=m$ to $k=0$, where $\bigcap\limits_{i=1}^{k}\Delta_{i,t}=\F_t$ if $k=0$. \\

It follows that $\bigcap\limits_{i=1}^{k}\Delta_{i,t}$ and $\F_t$ are complete intersections in $\PP^N$. 
They are defined by the union of some polynomials of type (\ref{eq:equations1}) and some linear forms.

\end{proof}

\begin{prop}
With the same hypothesis as in Proposition \ref{propembedding}, we have \[N=\dim |\lambda|_{\F_t}|=n-m(c-1).\]
\end{prop}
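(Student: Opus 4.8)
The plan is to reduce the computation of $N$ to a cohomological count, since $|\lambda|_{\F_t}|$ is a complete linear system and (by Proposition \ref{propembedding}) an embedding, so that $N=\dim|\lambda|_{\F_t}|=h^0(\F_t,\lambda|_{\F_t})-1$. First I would record the flag of smooth irreducible subvarieties
\[\F_t=W_0\supseteq W_1\supseteq\cdots\supseteq W_m=Y,\qquad W_k=\bigcap_{i=1}^{k}\Delta_{i,t},\]
whose smoothness and irreducibility are furnished by Corollary \ref{corintersectype} and Proposition \ref{propembedding}, and whose successive differences are cut out by the divisors $\Delta_{k,t}$, each linearly equivalent to $\lambda|_{\F_t}$ by Proposition \ref{cyclerelation}.

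Next I would peel off the $\Delta_{k,t}$ one at a time. For each $k$ the subvariety $W_k=\Delta_{k,t}\cap W_{k-1}$ is an effective Cartier (hyperplane) divisor on $W_{k-1}$ in the class $\lambda|_{W_{k-1}}$, so twisting its ideal-sheaf sequence by $\lambda$ and using $\Delta_{k,t}\sim\lambda$ gives
\[0\to\OO_{W_{k-1}}\to\OO_{W_{k-1}}(\lambda)\to\OO_{W_k}(\lambda)\to0.\]
The associated long exact sequence begins
\[0\to H^0(\OO_{W_{k-1}})\to H^0(\OO_{W_{k-1}}(\lambda))\to H^0(\OO_{W_k}(\lambda))\to H^1(\OO_{W_{k-1}}).\]

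The heart of the argument is to show that each restriction drops $h^0$ by exactly one, i.e. that $H^0(\OO_{W_{k-1}})=\C$ and $H^1(\OO_{W_{k-1}})=0$ for $k=1,\ldots,m$. Connectedness of $W_{k-1}$ gives the first equality. For the vanishing I would note that $\dim W_{k-1}=\dim Y+(m-k+1)\geq\dim Y+1\geq2$, using the hypothesis $\dim Y=n+m(c-\sum_{i=1}^c d_i)-c\geq1$, and then invoke the standard Koszul vanishing $H^1(\OO_V)=0$ for a smooth complete intersection $V\subseteq\PP^N$ of dimension at least $2$, exactly as in the computation of Lemma \ref{lemcha3}. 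Iterating the $m$ sequences yields $h^0(\F_t,\lambda|_{\F_t})=h^0(Y,\OO_Y(1))+m$.

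It then remains to evaluate the base term. By Corollary \ref{corintersectype} the map $\Phi|_Y$ realizes $Y$ as a complete intersection of type $T_1(d_1,m),\ldots,T_1(d_c,m)$, all of whose equations have degree at least $2$, inside its linear span $\PP^{n-mc}$; hence Lemma \ref{lemcha3} gives $h^0(Y,\OO_Y(1))=(n-mc)+1$. Combining, $h^0(\F_t,\lambda|_{\F_t})=n-mc+1+m=n-m(c-1)+1$, so $N=n-m(c-1)$. I expect the main obstacle to be the uniform cohomological bookkeeping—specifically guaranteeing that every intermediate $W_{k-1}$ has dimension at least $2$ so that the $H^1$-vanishing, and hence the ``drop by one'', holds at all $m$ stages; the remaining inputs are direct consequences of the complete-intersection structure established in the previous sections.
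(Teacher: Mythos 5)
Your proof is correct and follows essentially the same route as the paper: descend along the flag $\F_t=Y_0\supseteq Y_1\supseteq\cdots\supseteq Y_m=Y$ of intersections of the boundary divisors, use the twisted ideal-sheaf sequence together with $\Delta_{k,t}\sim\lambda$ and the vanishing of $H^1(\OO_{Y_k})$ for complete intersections to show each step drops $h^0(\OO(1))$ by exactly one, and then evaluate $h^0(Y,\OO_Y(1))$ from the complete-intersection type of $Y$ via Lemma \ref{lemcha3}. Your bookkeeping is in fact slightly more careful than the paper's (you correctly note that the $H^1$-vanishing needs $\dim Y_{k}\geq 2$, which holds for $k\leq m-1$, and you keep the distinction between $h^0$ and $\dim|\lambda|$ straight), but the argument is the same.
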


\begin{proof}
Denote $\bigcap\limits_{i=1}^{k}\Delta_{i,t}$ by $Y_k$ , and let $Y_0$ be $\F_t$. Note that $Y_k$ is a smooth projective variety with the embedding $|\lambda|_{\F_t}|:Y_0\rightarrow \PP^N$ by Corollary \ref{corintersectype} and Proposition \ref{propembedding}, and $\OO_{\F_t}(\Delta_{i,t})=\lambda|_{\F_t}=\OO_{Y_0}(1)$ by Proposition \ref{cyclerelation}. We have the following short exact sequence
\[0\rightarrow\OO_{Y_k}(-\lambda|_{Y_k})\rightarrow \OO_{Y_k}\rightarrow j_*\OO_{Y_{k+1}}\rightarrow 0,\]
where $j$ is the inclusion $Y_{k+1}\subseteq Y_k$.
Note that $Y_k$ is a complete intersection of dimension 
$>1$ for $k\leq m-1$. It follows that $H^1(Y_k,\OO_{Y_k})=0$. We tensor the short exact sequence above with $\OO_{Y_k}(1)$ and take $H^i(\_)$. It gives rise to a long sequence
\[0\rightarrow H^0(Y_k,\OO_{Y_k})\rightarrow H^0(Y_k, \OO_{Y_k}(1))\rightarrow H^0(Y_{k+1},\OO_{Y_{k+1}}(1))\rightarrow H^1(Y_k,\OO_{Y_k})=0.\]
Therefore, we conclude that \[\dim |\lambda|_{\F_t}|=\dim H^0(\F_t,\OO_{\F_t}(1)))=\dim H^0(Y_m,\OO_{Y_m}(1))+m=n-mc+m\]
where the last equality follows from Proposition \ref{propembedding}.

\end{proof}

We are able to show our main theorem \ref{mainthm} now.

\begin{proof} \label{thmproof}
With the notations as above, the general fiber $Y_0=\F_t\subseteq \PP^N$ is a complete intersection in $\PP^N$ defined by homogeneous polynomials of type (\ref{eq:equations1}) with $s$ linear forms (see the end of the proof of Proposition \ref{propembedding}). Note that
$$\dim\F_t=\dim\F-\dim\M_{0,m}=(c+1-\sum\limits_{i=1}^{c} d_i)m+n-c$$ and $N=n-m(c-1)$. Note that $Y_m=\bigcap\limits^m_{i=1} \Delta_i$ is a smooth complete intersection in $\PP^n$ of type (\ref{eq:equations}). The type (\ref{eq:equations}) consists of $m(\sum\limits_{i=1}^{c} d_i -2c)$ polynomials. It follows from the dimension count that
\[ s=N-\dim\F_t-m(\sum\limits_{i=1}^{c} d_i -2c)-c=0.\]
Therefore, we show the theorem.

\end{proof}

\section{Applications} \label{sectionapp}
We have three interesting applications of our main Theorem. The first one is related to the rational connectedness of the moduli spaces of rational curves on varieties. The rational connectedness of the moduli spaces of rational curves on varieties is arising from some arithmetic problems, such as weak approximation and the existence of rational points on varieties over function fields.

The second application towards to enumerative geometry. We give a proof of a classical formula to count the number of twisted cubics on a complete intersection (see the paper \cite{BV} and \cite{FP}). Moreover, we provide a new formula for counting the number of two crossing conics on a complete intersection.

The third one is to show the Picard group of $\F$ is finitely generated.\\

\textbf{The Rational Connectedness of Moduli Spaces}
\medskip

 \begin{prop} \label{propRC}
 With the same hypothesis as in Theorem \ref{mainthm}, if \[m\left(\sum\limits_{i=1}^c\frac{d_i(d_i-1)}{2}-1\right)+\sum\limits_{i=1}^c d_i \leq n,\]
 then $\F$ is rationally connected.
 \end{prop}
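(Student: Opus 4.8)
The plan is to realize $\F$ as the total space of a fibration over a rationally connected base with rationally connected general fibres, and then invoke the theorem of Graber--Harris--Starr \cite{GHS}. Concretely, I would use the forgetful morphism $F:\F\rightarrow \M_{0,m}$. The base $\M_{0,m}$ is a smooth projective rational variety (obtained from a projective space by a sequence of blow-ups along smooth centres), hence it is rationally connected. Since $\F$ is smooth and its general fibres are connected, $\F$ is irreducible, so it remains only to show that the general fibre $\F_t$ is rationally connected.

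By Theorem \ref{mainthm}, for a general $t\in\M_{0,m}$ the fibre $\F_t$ is a smooth complete intersection in $\PP^N$, with $N=n-m(c-1)$, of type $T_1(d_1,m),\ldots,T_1(d_c,m)$. The key step is numerical: a smooth complete intersection in $\PP^N$ whose defining equations have degrees summing to $D$ is Fano precisely when $D\le N$, since its dualising sheaf is $\OO(D-N-1)$. I would read $D$ off the matrices $T_1(d_i,m)$: each block $T_1(d_i,m)$ contributes one equation of degree $d_i$ together with $m$ equations of each degree $2,3,\ldots,d_i-1$, so its degrees sum to $m\bigl(2+3+\cdots+(d_i-1)\bigr)+d_i=m\bigl(\tfrac{d_i(d_i-1)}{2}-1\bigr)+d_i$. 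Hence $D=m\sum_{i=1}^c\frac{d_i(d_i-1)}{2}-mc+\sum_{i=1}^c d_i$, while $N=n-mc+m$, so the Fano inequality $D\le N$ rearranges (the $-mc$ cancels, and the residual $+m$ supplies the $-1$ inside the parenthesis) into exactly the hypothesis $m\bigl(\sum_{i=1}^c\frac{d_i(d_i-1)}{2}-1\bigr)+\sum_{i=1}^c d_i\le n$.

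Granting this, $\F_t$ is a smooth Fano complete intersection of dimension at least $1$, the dimension bound following from the standing hypothesis $n+m(c-\sum_{i=1}^c d_i)-c\ge 1$ of Theorem \ref{mainthm} via $\dim\F_t\ge\dim Y\ge 1$ as in Corollary \ref{corintersectype}. A smooth Fano variety is rationally connected by Kollár--Miyaoka--Mori (see \cite{K}), so $\F_t$ is rationally connected. With the base $\M_{0,m}$ rationally connected and the general fibre $\F_t$ rationally connected, the Graber--Harris--Starr theorem on families of rationally connected varieties \cite{GHS} shows that the total space $\F$ is rationally connected.

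The main obstacle here is bookkeeping rather than conceptual: one must extract the total degree $D$ correctly from the block matrices $T_1(d_i,m)$ and verify that substituting $N=n-m(c-1)$ reproduces the stated inequality exactly, as above. The only other points requiring attention are that the general fibre genuinely has positive dimension and that $\F$ is irreducible, both of which are already available, so that \cite{GHS} applies to $\F$ itself rather than to a proper dominating subvariety.
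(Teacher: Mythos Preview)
Your proposal is correct and follows essentially the same approach as the paper: you compute the canonical bundle of the complete intersection $\F_t$ from Theorem \ref{mainthm}, identify the Fano condition $D\le N$ with the stated inequality, conclude $\F_t$ is rationally connected via \cite{K}, and then pass to $\F$ using \cite{GHS}. The paper's only cosmetic difference is that it phrases the last step by choosing a rational curve in $\M_{0,m}$ through two given images and applying \cite[Corollary 1.3]{GHS} over that curve, rather than invoking the fibration statement directly.
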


 \begin{proof}
With the same notations as in Theorem \ref{mainthm}, the canonical bundle of the fiber $\F_t$ is given by
\[ K_{\F_t}=\OO_{\PP_{N}}\left(-N-1+m\left(\sum\limits_{i=1}^c(\frac{d_i(d_i-1)}{2}-1)\right)+\sum\limits_{i=1}^c d_i\right)\]
where $t\in \M_{0,m}$ is a general point as in Theorem \ref{mainthm}. The inequality in the hypothesis of the proposition is equivalent to say $K_{\F_t}$ is anti-ample. In particular, the fiber $\F_t$ is a smooth projective Fano variety, hence, it is rationally connected, see \cite[Chapter V]{K}. We claim that the fiber $\F$ is rationally connected.

In fact, we suppose that $p$ and $q$ are general points of $\F$ such that $F(p)$ and $F(q)$ are general points in $\M_{0,m}$. Since $\M_{0,m}$ is a smooth projective and rational variety, there is a rational curve $D$ in $\M_{0,m}$ connecting $F(p)$ and $F(q)$ such that the fiber of $F$ over a general point $D$ is rationally connected. Therefore, by Corollary $1.3$ in \cite{GHS}, the general fiber $\F$ is rationally connected.
\end{proof}

\begin{remark}
By \cite[Lemma 6.5]{DS}, it is not hard to prove that the canonical bundle $K_\F$ is trivial on some rational curves sitting inside the maximal degeneration locus if $m\geq 4$. Therefore, $\F$ is not Fano for $m\geq 4$.
\end{remark}

\textbf{Enumerative Geometry}
\medskip

Suppose that $X'$ is a complete intersection which is cut out from a complete intersection $X$ by $n-s$ general hyperplanes. Assume $s\geq m$. We denote the general fiber of evaluation map corresponding to $X'$ by $\F'$. It is clear that $\F'$ is cut out from the general fiber $\F$ (corresponding to $X$) by $n-s$ very ample divisors in $|\lambda|$.
\begin{equation} \label{eq:generalsections}
\xymatrix{\F' \ar@/^2pc/[rr]|-{F'} \ar[d]^{\Phi'} \ar@{^{`}->}[r] & \F\ar[d]^{\Phi} \ar[r]^{F} &\M_{0,m} \\
\PP^{s-m} \ar@{^{`}->}[r]& \PP^{n-m}
}
\end{equation}

If $\F'$ consists of discrete points, i.e., $\dim \F'$=0, then the number of these points is the number of the rational curves of degree $m$ passing through $m$ general points on $X'$. See the following proposition for the precise statement.

\begin{prop}\cite[Collary, page 9]{BV}\label{countcubic}
Let $X$ be a smooth complete intersection of degree $(d_1,\ldots,d_r)$ in $\mathbb{P}^{n+r}$, with $n=3\sum\limits_{i=1}^r(d_i-1)-3$. Then the number of twisted cubics in $X$ passing through 3 general points $(p,q,r)$ is $\frac{1}{d^2}\prod\limits_{i=1}^r (d_i!)^3$ where d is the degree of $X$.

\end{prop}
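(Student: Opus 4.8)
The plan is to extract this classical count from the complete intersection description of $\F$ in Theorem \ref{mainthm} by a Bézout degree computation, once a dimension issue is handled. Matching Beauville's notation to ours by setting $c=r$ and reading $X\subseteq\PP^{n+r}$, Lemma \ref{lemm0} gives for $m=3$
\[
\dim\F=(r+2-\sum_{i=1}^r d_i)\cdot 3+(n+r)-r-3=n+3-3\sum_{i=1}^r(d_i-1),
\]
which is exactly $0$ under the hypothesis $n=3\sum_{i=1}^r(d_i-1)-3$. Thus $\F$ is a finite set of points and the number of twisted cubics we seek is its cardinality: since $\dim\F=0$ the boundary $\F\cap\Delta$, being a divisor by Lemma \ref{boundary}, is empty, so by Corollary \ref{degenerationtype} every point of $\F$ parametrizes a smooth rational normal cubic through $p_1,p_2,p_3$ spanning a $\PP^3$, i.e.\ an honest twisted cubic; and because $\M_{0,3}$ is a single point the marked-point data is rigid, so ``stable map $\mapsto$ image curve'' is a bijection between $\F$ and the set of twisted cubics on $X$ through the three points. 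As $\F$ is smooth of expected dimension $0$ it is reduced, so each curve is counted once and the answer is $\deg\F$.

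The obstacle is that this is precisely the range in which Theorem \ref{mainthm} is not directly available: the maximal degeneration locus $Y=\bigcap_{i=1}^{3}\Delta_{i,t}$ here has dimension $\dim\F-3=-3<0$, so the hypothesis $n+m(c-\sum d_i)-c\geq 1$ of that theorem fails. I would circumvent this exactly as in diagram (\ref{eq:generalsections}): realise the enumerative $X$ as a general linear section $X=\hat X\cap\PP^{n+r}$ of a complete intersection $\hat X\subseteq\PP^{\hat n}$ of the same type $(d_1,\dots,d_r)$, with $\hat n$ large enough that Theorem \ref{mainthm} does apply to $\hat X$. Writing $\hat\F$ for the fiber of $\hat X$ (it plays the role of $\F$ in (\ref{eq:generalsections}), while our zero-dimensional $\F$ plays that of $\F'$), Theorem \ref{mainthm} embeds $\hat\F$ by $|\lambda|$ as a complete intersection in $\PP^{\hat N}$ of type $T_1(d_1,3),\dots,T_1(d_r,3)$. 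By (\ref{eq:generalsections}) the fiber $\F$ is cut out of $\hat\F$ by $\hat n-(n+r)$ general divisors linearly equivalent to $\lambda$, i.e.\ by that many general hyperplanes of $\PP^{\hat N}$, so $\F$ is itself a transverse zero-dimensional complete intersection in $\PP^{\hat N}$, defined by the blocks $T_1(d_i,3)$ together with those extra hyperplanes.

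It then remains to compute $\deg\F$ by Bézout as the product of the degrees of all defining equations; the added hyperplanes contribute a factor $1$ each and so do not affect the count. Each block $T_1(d_i,3)$ consists of one equation of degree $d_i$ and three equations of each degree $j=2,\dots,d_i-1$, contributing
\[
d_i\prod_{j=2}^{d_i-1}j^{\,3}=d_i\,((d_i-1)!)^3 .
\]
Multiplying over $i$ and using $(d_i!)^3=d_i^{3}((d_i-1)!)^3$ together with $d=\prod_i d_i$ gives
\[
\deg\F=\prod_{i=1}^r d_i\,((d_i-1)!)^3=\frac{1}{d^2}\prod_{i=1}^r (d_i!)^3 ,
\]
which is Beauville's formula.

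The genuinely delicate points, rather than the arithmetic, are the two structural claims underpinning the argument: that the hypothesis forces the boundary of $\F$ to be empty (so that no reducible stable map contributes and every point is an honest twisted cubic) and the general fiber to be reduced (so that Bézout counts each curve with multiplicity one); and that passing to a general linear section legitimately transports the complete intersection description of Theorem \ref{mainthm} from $\hat\F$ to the enumerative fiber $\F$ through (\ref{eq:generalsections}). I expect the second of these, namely checking that the $\hat n-(n+r)$ cutting divisors are genuinely the hyperplane sections of the $|\lambda|$-embedding and meet $\hat\F$ transversally, to be where most of the care is needed.
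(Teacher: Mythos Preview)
Your proposal is correct and follows essentially the same route as the paper: embed the enumerative $X$ as a general linear section of a complete intersection $\hat X$ (the paper's $Y$) of the same type in a projective space of large dimension, apply Proposition~\ref{propm=3} (i.e.\ Theorem~\ref{mainthm} with $m=3$) to the big fiber $\hat\F$, then read off the count as the degree of $\hat\F$ via B\'ezout on the type $T_1(d_i,3)$. The paper's proof is terser---it simply interprets a complementary linear subspace of $\PP^{e-3}$ as the choice of the ambient $\PP^{l+3}$ for $X$ and asserts that $\deg\hat\F$ equals the count---whereas you spell out the dimension check, the emptiness of the boundary, the reducedness of the zero-dimensional $\F$, and the B\'ezout arithmetic; the paper takes all of these for granted, including the point you flag about the cutting divisors being $\lambda$-hyperplanes (it simply declares this ``clear'' in the paragraph preceding~(\ref{eq:generalsections})).
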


\begin{proof}

We can assume that the variety $X$ is cut out by hyperplanes from a smooth complete intersection $Y$ of type $(d_1,\ldots,d_r)$ and $Y\subseteq \PP^e$ has sufficiently large dimension. The degree of the general fiber $\F\subseteq \mathbb{P}^{e}$ corresponding to $Y$ has an enumerative geometrical interpretation.

In fact, a point in the $\mathbb{P}^{e-3}=\PP^e/\Span(p,q,r)$ parametrizes a 3-plane $\PP^3$ in $\PP^e$ containing the 2-plane $\PP^2=\Span(p,q,r)$. More generally, a sub-projective space $\mathbb{P}^k(\subseteq \mathbb{P}^{e-3})$ corresponds to a $k+3$-plane $\mathbb{P}^{k+3}$ in $\PP^e$ (containing $\Span(p,q,r)$). So if we take a $l$-plane $\mathbb{P}^l\subseteq \PP^e$ 
such that $l+\text{dim}(\F)=e$, then $\#(\PP^l\cap \F)$ is the number of twisted cubics in $X=\mathbb{P}^{l+3}\cap Y$ passing through $p, q$ and $r$. In other words, the degree of $\F$ is equal to
\begin{center}
{
 \# \{twisted cubics in $X$ passing through $p, q$ and $r$\}.
}
\end{center}
By Theorem \ref{mainthm}, the degree of $\F$ is
$\frac{1}{d^2}\prod\limits_{i=1}^r (d_i!)^3$.
We show the proposition.
\end{proof}

\begin{remark} \label{rmklinkconic}
One can get a similar formula for linking conics on $X$ passing through $4$ general points. More precisely, the number of these linking conics is $\frac{1}{d^3}\prod\limits_{i=1}^r (d_i!)^4$ where $d$ is the degree of $X$ where $X$ is a smooth complete intersection of degree $(d_1,\ldots,d_r)$ in $\mathbb{P}^{n+r}$ with $n=4\sum\limits_{i=1}^r(d_i-1)-4$.
\end{remark}
\textbf{The Picard Group of Moduli Spaces}
\medskip\\
The following lemma is standard and easy to prove. We omit the proof.
\begin{lemm}\label{lemmapicfinte}
Consider a morphism $h:A\rightarrow B$ between two smooth varieties $A$ and $B$ over $\mathbb{C}$.
Suppose that the morphism $h$ is proper and dominant. Let $K$ be the function field of $B$. If the following two conditions are satisfied
\begin{enumerate}
  \item the generic fiber $A_{K}$ of $h$ is geometrical connected,
  \item the Picard groups $\Pic(A_{K})$ and $\Pic(B)$ finitely generated,
\end{enumerate}
then the Picard group $\Pic(A)$ is finitely generated.

\end{lemm}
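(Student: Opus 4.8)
The plan is to study $\mathrm{Pic}(A)$ through restriction to the generic fibre. Write $\eta$ for the generic point of $B$, so $K=\kappa(\eta)$ and $A_K=A\times_B\mathrm{Spec}(K)$, and let $\mathrm{res}\colon \mathrm{Pic}(A)\to \mathrm{Pic}(A_K)$ be the restriction homomorphism. Since $\mathrm{Pic}(A_K)$ is finitely generated and subgroups of finitely generated abelian groups are finitely generated (as $\mathbb{Z}$ is Noetherian), it is enough to establish two facts: that $\mathrm{res}$ is surjective, and that $\ker(\mathrm{res})$ is finitely generated. Then $\mathrm{Pic}(A)$ sits in a short exact sequence with finitely generated kernel $\ker(\mathrm{res})$ and finitely generated quotient $\mathrm{Pic}(A_K)$, hence is itself finitely generated. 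Throughout I use that $A$ and $B$ are smooth, so that $\mathrm{Pic}=\mathrm{Cl}$ and I may freely argue with Weil divisors.

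For surjectivity I would use that $A_K=\varprojlim_U h^{-1}(U)$, the limit taken over dense opens $U\subseteq B$, and hence $\mathrm{Pic}(A_K)=\varinjlim_U \mathrm{Pic}(h^{-1}(U))$ by the standard limit formalism for finitely presented objects. Thus any class on $A_K$ extends to $h^{-1}(U)$ for some dense $U$. Because $A$ is smooth, taking the closure of a Weil divisor shows that the restriction $\mathrm{Cl}(A)\to \mathrm{Cl}(h^{-1}(U))$ is surjective, since $A\setminus h^{-1}(U)$ affects class groups only through its divisorial part. Composing these two surjections shows $\mathrm{res}$ is surjective.

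The heart of the matter is the kernel. By the excision sequence for divisor class groups, $\ker(\mathrm{res})$ is generated by the classes of the \emph{vertical} prime divisors of $A$, namely the prime divisors $W$ whose image closure $\overline{h(W)}$ is a proper subvariety of $B$. I would split these into two families. First, the \emph{contracted} divisors, for which $\overline{h(W)}$ has codimension at least $2$: each such $W$ lies in the locus of $A$ where the fibre dimension exceeds $\dim A-\dim B$, which is a proper closed subset of $A$ with only finitely many divisorial components, so there are only finitely many contracted $W$. Second, those $W$ dominating a prime divisor $D=\overline{h(W)}$ of $B$. Here I invoke the hypothesis that $A_K$ is geometrically connected together with the characteristic $0$ assumption: by generic smoothness and Stein factorization there is a dense open $U\subseteq B$ over which $h$ is smooth with geometrically connected fibres. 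For a prime divisor $D$ whose generic point lies in $U$, the fibre over that generic point is smooth and geometrically connected, hence geometrically integral, so $h^{-1}(D)$ has a unique reduced component $W_D$ dominating $D$, and $h^{*}[D]=[W_D]+(\text{contracted terms})$. Consequently $[W_D]\in h^{*}\mathrm{Pic}(B)+\langle\text{contracted divisors}\rangle$ for all but the finitely many $D$ whose generic point avoids $U$. Therefore $\ker(\mathrm{res})$ is generated by the finitely many generators of $h^{*}\mathrm{Pic}(B)$, the finitely many contracted divisor classes, and the finitely many components of the finitely many exceptional $h^{-1}(D)$; so $\ker(\mathrm{res})$ is finitely generated and the lemma follows.

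The main obstacle is precisely this second family. A priori there are infinitely many prime divisors $D\subseteq B$, each potentially contributing several components $W$ to $\ker(\mathrm{res})$, so without additional input the kernel could fail to be finitely generated. Geometric connectedness of $A_K$ is exactly what rules this out: it forces $h^{-1}(D)$ to be irreducible and reduced along its dominating part for generic $D$, thereby tying almost all of the classes $[W_D]$ to the finitely generated group $h^{*}\mathrm{Pic}(B)$. The delicate step is to verify carefully that connectedness of the generic fibre propagates to the generic points of prime divisors, which I would do through Stein factorization and the birationality of its finite part over $B$.
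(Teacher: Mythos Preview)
Your argument is correct and follows essentially the same route as the paper: choose a dense open $U\subseteq B$ over which $h$ is smooth with geometrically irreducible fibres, then show that every vertical prime divisor either lies among the finitely many divisorial components of $A\setminus h^{-1}(U)$ (your contracted and exceptional classes) or is the full preimage of a prime divisor of $B$ and hence lies in $h^{*}\mathrm{Pic}(B)$. The only cosmetic difference is that you additionally prove surjectivity of $\mathrm{res}$, whereas the paper is content with the weaker observation that $\mathrm{im}(\mathrm{res})\subseteq\mathrm{Pic}(A_K)$ is automatically finitely generated.
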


\begin{lemm} \label{lemmci}
Suppose that $U$ is a variety with a flat and projective morphism $g$ as follows.
\[\xymatrix{Y  \ar[d]_{g}\ar@{}[r]|-{\subseteq}&\PP^n_{U}\ar[dl]\\
U }\]
We assume that the fiber $Y_s$ over any $s\in U(\mathbb{C})$ is a complete intersection of type $(d_1,\ldots,d_c)$ in $\PP^n_{s}$. If $K$ is the function field of $U$, then the generic fiber $Y_{K}$ of $g$ is a complete intersection of type $(d_1,\ldots,d_c)$ in $\PP^n_{K}$.

\end{lemm}

\begin{proof}
We first notice that if a projective variety $T$ in $\PP^n$ is a complete intersection of type $(d_1,\dots, d_g)$ defined by homogenous polynomials $(F_1,F_2,\ldots,F_g)$, then , by Hilbert's theory (see \cite[Chapter I section 7]{H}), the polynomials $(F_1,F_2,\ldots,F_g)$ are minimal generators of the ideal sheaf $I_T$ and the dimension of $H^0(\PP^n,I_T(m))$ is only dependent on the type $(d_1,\dots, d_g)$.

Therefore, the dimension function  \[s\mapsto \dim H^0(\PP^n_s,I_{Y_s}(m))\]
is a constant function ($s\in U(\mathbb{C})$) for any $m$. It follows from \cite[Page 48, Corollary 2]{AVB} that
\[\dim H^0(\PP^n_K,I_{Y_K}(m))=\dim H^0(\PP^n_s,I_{Y_s}(m)).\] This equality implies that we can choose homogenous polynomials \[(F_1,F_2,\ldots,F_c)\] defined over $K$ such that, on the domain $V(\subseteq U)$ of the coefficients of $F_i$, the fiber $Y_s$ of $g$ over a point $s\in V(\mathbb{C})$ is a complete intersection in $\PP_s^n$ defined by the equations as follows:\[(F_1,F_2,\ldots,F_c)|_{\PP^n_s}.\]

It follows that the projective variety $Y_K$ is a complete intersection in $\PP^n_K$ defined by $(F_1,F_2,\ldots,F_c)$. We prove the lemma.
\end{proof}

\begin{lemm}\label{lemmpicgen}
Consider a projective morphism $h$ from a scheme $A$ to $\Spec(K)$ where $K$ is a field. If the following two conditions hold,
\begin{enumerate}
  \item the morphism $h$ has a section $\sigma$
  \item $\Pic(A_{\overline{K}}$)=$\mathbb{Z}[L|_{A_{\overline{K}}}]$=$\mathbb{Z}$ where $L$ is a line bundle on $A$ and $\overline{K}$ is the algebraic closure of $K$,
\end{enumerate}
then $\Pic(A)=\mathbb{Z}=\mathbb{Z}[L]$.
\end{lemm}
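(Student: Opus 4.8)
The plan is to compare $Pic(A)$ with $Pic(A_{\overline{K}})$ through the base-change homomorphism and to control the difference by Galois descent, feeding the section $\sigma$ into the descent as the source of a $K$-rational point. Write $G=\mathrm{Gal}(\overline{K}/K)$ and consider the pullback map
\[\rho:Pic(A)\rightarrow Pic(A_{\overline{K}}),\qquad M\mapsto M|_{A_{\overline{K}}}.\]
By hypothesis $(2)$ the class $[L|_{A_{\overline{K}}}]$ generates $Pic(A_{\overline{K}})\simeq\mathbb{Z}$, and since $\rho([L])=[L|_{A_{\overline{K}}}]$ the image of $\rho$ already contains a generator; hence $\rho$ is surjective. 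So the whole statement reduces to proving that $\rho$ is injective: once $\rho$ is an isomorphism, $[L]$ maps to a generator and therefore generates $Pic(A)\simeq\mathbb{Z}$, which is the assertion.

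For injectivity I would invoke the five-term (low degree) exact sequence of the Hochschild--Serre spectral sequence $H^p(G,H^q(A_{\overline{K}},\mathbb{G}_m))\Rightarrow H^{p+q}(A,\mathbb{G}_m)$,
\[0\rightarrow H^1(G,H^0(A_{\overline{K}},\mathbb{G}_m))\rightarrow Pic(A)\xrightarrow{\ \rho\ } Pic(A_{\overline{K}})^{G}\rightarrow H^2(G,H^0(A_{\overline{K}},\mathbb{G}_m)).\]
Thus $\ker\rho=H^1(G,H^0(A_{\overline{K}},\mathbb{G}_m))$. The crucial input is the constants computation $H^0(A_{\overline{K}},\mathbb{G}_m)=\overline{K}^{*}$, i.e. that the only global units on $A_{\overline{K}}$ are the nonzero scalars; granting this, Hilbert's Theorem $90$ gives $H^1(G,\overline{K}^{*})=0$, so $\ker\rho=0$. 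Moreover the $G$-action on $Pic(A_{\overline{K}})\simeq\mathbb{Z}$ fixes the generator $[L|_{A_{\overline{K}}}]$, being the restriction of the $K$-line bundle $L$, so in fact $Pic(A_{\overline{K}})^{G}=\mathbb{Z}$ and the target of $\rho$ is the full $\mathbb{Z}$; combining injectivity with the surjectivity above shows $\rho$ is an isomorphism and $Pic(A)=\mathbb{Z}[L]$.

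The hard part is precisely the constants computation $H^0(A_{\overline{K}},\mathbb{G}_m)=\overline{K}^{*}$, and this is where the section $\sigma$ does its work: since $A$ is projective over $K$ and $\sigma$ furnishes a $K$-rational point, the connected component of $A$ meeting that point is geometrically connected, and in the intended application $A_{K}$ is a smooth complete intersection (hence geometrically integral by the earlier lemmas), so $A_{\overline{K}}$ is proper, connected and reduced and its global regular functions, a fortiori its global units, are constants. If one prefers to argue surjectivity through the spectral sequence rather than from the explicit generator, the same rational point gives a retraction $\sigma^{*}$ of the inflation $Br(K)=H^2(G,\overline{K}^{*})\rightarrow H^2(A,\mathbb{G}_m)$, making that inflation injective and forcing the obstruction map $Pic(A_{\overline{K}})^{G}\rightarrow Br(K)$ to vanish by exactness; this provides an independent confirmation that $\rho$ is onto $Pic(A_{\overline{K}})^{G}$.
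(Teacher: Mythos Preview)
Your argument is correct and is, at bottom, the same Galois-descent computation the paper carries out, only packaged differently. The paper invokes representability of the relative Picard functor (citing \cite{GIT}): with the section $\sigma$ one has $Pic(A)=Pic_{A/K}(K)$ and $Pic(A_{\overline K})=Pic_{A/K}(\overline K)$, whence $Pic(A)=Pic(A_{\overline K})^{G}$, and then simply observes that the generator $[L|_{A_{\overline K}}]$ is $G$-fixed because $L$ is defined over $K$. You reach the same identification $Pic(A)\cong Pic(A_{\overline K})^{G}$ via the low-degree terms of the Hochschild--Serre spectral sequence together with Hilbert~90, and you obtain surjectivity onto $\mathbb{Z}$ directly from the fact that the generator descends.

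One point worth flagging: both routes tacitly need $H^{0}(A_{\overline K},\mathcal{O})=\overline K$ (equivalently, $A$ geometrically integral), so that either the Picard functor has the expected description on points or, in your language, the units are scalars. You make this explicit and justify it in the intended application, whereas the paper leaves it buried in the Picard-scheme formalism. Your Brauer-group remark about the section splitting the inflation $Br(K)\to H^{2}(A,\mathbb{G}_m)$ is a pleasant redundancy check, but it is not needed once one already knows the generator comes from $K$.
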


\begin{proof}
The hypothesis (1) of the lemma ensures that the Picard functor is representable by the Picard scheme $\Pic_{A/K}$.
Therefore, the Picard group $\Pic(A)$ (resp. $\Pic(A_{\overline{K}})$) is just the $K$(resp. $\overline{K}$)-points of $\Pic_{A/K}$. In other words, we have that
\[\Pic(A)=\Pic_{A/K}(K)~\text{and}~\Pic (A_{\overline{K}})=\Pic_{A/K}(\overline{K}).\]
It follows that $\Pic (A)=\Pic(A_{\overline{K}})^{\Gal(\overline{K}/K)}=\mathbb{Z}[L|_{A_{\overline{K}}}]^{\Gal(\overline{K}/K)}=\mathbb{Z}[L]$
where the last equality follows from the fact that the line bundle $L|_{A_{\overline{K}}}$ is $\Gal(\overline{K}/K)$ invariant. We prove the lemma.
\end{proof}

\begin{prop}\label{picfinite}
 With the same hypothesis as in Theorem \ref{mainthm}, the Picard group $\Pic(\F)$ is finitely generated.
\end{prop}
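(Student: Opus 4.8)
The plan is to apply Lemma \ref{lemmapicfinte} to the forgetful morphism $F:\F\rightarrow \M_{0,m}$, taking $A=\F$ and $B=\M_{0,m}$. Both are smooth (the source by the smoothness of a general fiber established earlier, the target since $\M_{0,m}$ is a smooth projective variety), and $F$ is proper because $\F$ is projective. The morphism $F$ is dominant, indeed surjective, because it admits the section $\sigma$ of Lemma \ref{lemmsection}. The geometric generic fiber is connected: we proved that the general fibers $\F_t$ are connected, and over $\mathbb{C}$ this forces the geometric generic fiber of $F$ to be connected as well. Thus hypothesis (1) of Lemma \ref{lemmapicfinte} holds, and it remains to verify that $Pic(\M_{0,m})$ and $Pic(\F_K)$ are finitely generated, where $K$ is the function field of $\M_{0,m}$.

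The group $Pic(\M_{0,m})$ is finitely generated because $\M_{0,m}$ is a smooth projective rational variety, whose Picard group is a free abelian group of finite rank. The remaining, and main, point is the finite generation of $Pic(\F_K)$, and this is where the complete intersection description of Theorem \ref{mainthm} enters.

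First I would shrink $\M_{0,m}$ to an open subset $U$ over which $F$ is flat and, via the relative line bundle $\lambda$, realizes every fiber $\F_s$ for $s\in U$ as a complete intersection in $\PP^N_{\mathbb{C}(s)}$ of the fixed type appearing in Theorem \ref{mainthm}; such a $U$ exists because Theorem \ref{mainthm} describes the general fiber and the embedding together with its defining equations vary algebraically. Applying Lemma \ref{lemmci} to this flat family then shows that the generic fiber $\F_K$ is itself a complete intersection in $\PP^N_K$ of the same type. Because the hypothesis $n+m(c-\sum_{i=1}^c d_i)-c\geq 1$ forces $\dim \F_t=\dim Y+m\geq m+1\geq 4$, the geometric generic fiber $\F_{\overline{K}}$ is a smooth complete intersection of dimension at least three, so the Grothendieck--Lefschetz theorem gives $Pic(\F_{\overline{K}})=\mathbb{Z}$, generated by the restriction of $\OO_{\PP^N}(1)$.

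Finally I would pass from $\overline{K}$ back to $K$ using Lemma \ref{lemmpicgen}: the section $\sigma$ restricts to a section of $\F_K\rightarrow \Sp(K)$, so with $L=\lambda|_{\F_K}$ that lemma yields $Pic(\F_K)=\mathbb{Z}[\lambda|_{\F_K}]$, which is finitely generated. Feeding this together with the finite generation of $Pic(\M_{0,m})$ into Lemma \ref{lemmapicfinte} gives that $Pic(\F)$ is finitely generated. The main obstacle is the middle step: arranging the flat family so that Lemma \ref{lemmci} applies and then controlling the Picard group of the complete intersection $\F_K$ over the non-closed field $K$. This is exactly why Lemmas \ref{lemmci} and \ref{lemmpicgen} were prepared, and why the dimension bound guaranteeing that Grothendieck--Lefschetz is available plays an essential role.
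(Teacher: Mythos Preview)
Your proposal is correct and follows essentially the same route as the paper: apply Lemma \ref{lemmci} to see that $\F_K$ is a complete intersection of dimension at least three, invoke Grothendieck--Lefschetz over $\overline{K}$, descend to $K$ via Lemma \ref{lemmpicgen} using the section $\sigma$, and then feed this into Lemma \ref{lemmapicfinte}. Your write-up is in fact a bit more careful than the paper's, explicitly verifying dominance, connectedness of the geometric generic fiber, and the finite generation of $Pic(\M_{0,m})$, all of which the paper leaves implicit.
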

\begin{proof}
Let $K$ be the function field of $\M_{0,m}$. It follows from Theorem \ref{mainthm} and Lemma \ref{lemmci} that the generic fiber $\F_K$ of the forgetful map $F$ is a complete intersection. The hypothesis of Theorem \ref{mainthm} implies that $\dim \F_K$ is at least $3$.

It follows from the equality $\Pic(\F_{\overline{K}})=\mathbb{Z}=\mathbb{Z}[\OO_{\PP^N_{\overline{K}}}(1)]=\mathbb{Z}[\lambda_{\overline{K}}]$ and Lemma \ref{lemmpicgen} that $\Pic(\F_K)$ is $\mathbb{Z}[\lambda_K]=\mathbb{Z}$. 

If we take $\F=A$ and $\M_{0,m}=B$ in Lemma \ref{lemmapicfinte}, then we conclude
\begin{enumerate}
  \item $\Pic(\F)$ is finitely generated, 
  \item and the restriction morphism $r^*:\Pic(\F)\rightarrow \Pic(\F_K)$ is surjective.
\end{enumerate}

We prove the proposition.
\end{proof}

\bibliographystyle{alpha}
\bibliography{mybib}
{}

\end{document}